\documentclass{amsart}
\usepackage{latexsym,amsxtra,amscd,ifthen}
\usepackage{amsfonts}
\usepackage{verbatim}
\usepackage{amsmath}
\usepackage{amsthm}
\usepackage{amssymb}
\usepackage{url}
\usepackage[arrow,matrix]{xy}
\usepackage[colorlinks=true,citecolor=blue,linkcolor=blue]{hyperref}

\theoremstyle{plain}

% The following line would produce Theorem A, Theorem B, etc., instead
% of Theorem 1, Theorem 2, etc. when we use maintheorem.

% \renewcommand{\themaintheorem}{\Alph{maintheorem}}

% Use the 'theorem' counter to number theorems, etc., so everything
% is numbered with the same scheme.
\newtheorem{theorem}{Theorem}
\newtheorem{lemma}[theorem]{Lemma}
\newtheorem{proposition}[theorem]{Proposition}

% Number theorems, etc., within each section.
% Number equations with each theorem (so when proving theorem 2.3, you
% get equations (2.3.1), (2.3.2), etc.)
\numberwithin{theorem}{section}
\numberwithin{equation}{theorem}

\theoremstyle{definition}
\newtheorem{definition}[theorem]{Definition}
\newtheorem{setup}[theorem]{Setup}
\newtheorem{example}[theorem]{Example}
\newtheorem{remark}[theorem]{Remark}

\newtheorem*{question*}{Question}

\DeclareMathOperator{\ch}{char}

\DeclareMathOperator{\Ext}{Ext}
\DeclareMathOperator{\Tor}{Tor}
\DeclareMathOperator{\Hom}{Hom}
\DeclareMathOperator{\pd}{projdim}

\DeclareMathOperator{\im}{im}
\DeclareMathOperator{\depth}{depth}
\DeclareMathOperator{\Aut}{Aut}

\DeclareMathOperator{\Mod}{Mod}

\def\mfm{\mathfrak m}
\def\kk{\operatorname{\bf k}}

\def\QMod{\operatorname{QMod}}

\def\ch{\operatorname{char}}

\begin{document}

\title[Cohen-Macaulay invariant subalgebras]{Cohen-Macaulay invariant subalgebras of Hopf dense Galois extensions}

\author{Ji-Wei He and Yinhuo Zhang}
\address{He: Department of Mathematics,
Hangzhou Normal University,
Hangzhou Zhejiang 310036, China}
\email{jwhe@hznu.edu.cn}
\address{Zhang:
Department of Mathematics and Statistics, University of Hasselt, Universitaire Campus,
3590 Diepenbeek, Belgium} \email{yinhuo.zhang@uhasselt.be}

\begin{abstract} Let $H$ be a semisimple Hopf algebra, and let $R$ be a noetherian left $H$-module algebra. If $R/R^H$ is a right $H^*$-dense Galois extension, then the invariant subalgebra $R^H$ will inherit the AS-Cohen-Macaulay property from $R$ under some mild conditions, and $R$, when viewed as a right $R^H$-module, is a Cohen-Macaulay module. In particular, we show that if $R$ is a noetherian complete semilocal algebra which is AS-regular of global dimension 2 and $H=\kk G$ for some finite subgroup $G\subseteq\Aut(R)$, then all the indecomposable Cohen-Macaulay module of $R^H$ is a direct summand of $R_{R^H}$, and hence $R^H$ is Cohen-Macaulay-finite, which generalizes a classical result for commutative rings. The main tool used in the paper is the extension groups of objects in the corresponding quotient categories.
\end{abstract}

\subjclass[2000]{Primary 16D90, 16E65, Secondary 16B50}

%16B50 (1991-now) Category-theoretic methods and results
%16D90 (1991-now) Module categories [See also 16Gxx, 16S90];
%module theory in a category-theoretic context; Morita equivalence and duality
%16E10 (1991-now) Homological dimension
%16E35 (2010-now) Derived categories
%16E65 (2000-now) Homological conditions on rings (generalizations of
%regular, Gorenstein, Cohen-Macaulay rings, etc.)

\keywords{}

%\thanks{ }

\maketitle

%\tableofcontents

\setcounter{section}{-1}
\section{Introduction}

Motivated by the study of quotient singularities of noncommutative projective schemes, Van Oystaeyen and the authors introduced the concept of Hopf dense Galois extension in \cite{HVZ2}. The present paper is a further study of Hopf dense Galois extensions. In this paper, we focus on the Cohen-Macaulay properties of the invariant subalgebra of a Hopf dense Galois extension. Let $H$ be a finite dimensional semisimple Hopf algebra, and let $R$ be a left $H$-module algebra. Let $H^*$ be the dual Hopf algebra. Since $H$ is finite dimensional, $R$ is a right $H^*$-comodule algebra. Denote by $R^H(=R^{coH^*})$ the invariant subalgebra. The algebra extension $R/R^H$ is called a {\it right $H^*$-dense Galois extension} (\cite[Definition 1.1]{HVZ2}) if the cokernel of the map
$$\beta:R\otimes_{R^H} R\to R\otimes H^*, r\otimes r'\mapsto (r\otimes1)\rho(r')$$ is finite dimensional, where $\rho:R\to R\otimes H^*$ is the right $H^*$-comodule structure map.

The concept of a Hopf dense Galois extension is a weaker version of that of a Hopf Galois extension.
The main feature of a Hopf dense Galois extension is that there is an equivalence between some quotient categories. More precisely, let $R$ be a noetheiran algebra which is a left $H$-module algebra over a finite dimensional semisimple Hopf algebra $H$. Let $\Mod R$ be the category of right $R$-modules and let $\Tor R$ be the full subcategory of $\Mod R$ consisting of torsion $R$-modules (see Section \ref{sec1}). Then we obtain a quotient category $\QMod R=\frac{\Mod R}{\Tor R}$. Set $B=R\#H$. Since $R$ is noetherian, $B$ and the invariant subalgebra $R^H$ are also noetherian. Then we also have quotient category $\QMod B$ and $\QMod R^H$. If $R/R^H$ is right $H^*$-dense Galois extension, then there is an equivalence of abelian categories $\QMod R^H\cong\QMod B$ (cf. \cite[Theorem 2.4]{HVZ2}). Under this equivalence, we show in this paper that the invariant subalgebra $R^H$ inherits many homological properties from $R$. Especially, we have the following results (Theorems \ref{thm1} and \ref{thm4.1}). The terminologies may be found in Section \ref{sec3}. \\

\noindent{\bf Theorem.} {\it Let $H$ be a finite dimensional semisimple Hopf algebra, and let $R$ be a noetherian left $H$-module algebra. Assume that the $H$-module $R$ is admissible and $\Tor B$ is stable. If $R/R^H$ is a right $H^*$-dense Galois extension, then the following statements hold.

{\rm(i)} If $R$ is AS-Cohen-Macaulay of dimension $d$, so is $R^H$.

{\rm(ii)} If $R$ is AS-Cohen-Macaulay of dimension $d$, then $R$, viewed as a right $R^H$-module, is Cohen-Macaulay.}\\

There are plenty of $H$-module algebras satisfying the assumptions in the theorem above. For example, if $R$ is a noetherian complete semilocal algebra and $H$ is a finite group algebra, then the $H$-module algebra $R$ is admissible and $\Tor B$ is stable (cf. Example \ref{ex1}).

The main tool for proving of the theorem above is the following observation (Theorem \ref{thm0}) which gives a way to compute the extension groups of objects in the quotient category $\QMod R^H$.\\

\noindent{\bf Theorem.} {\it Assume that the $H$-module algebra $R$ is admissible and $\Tor B$ is stable. Let $N$ and $M$ be finitely generated right $B$-modules. Let $\mathcal{N}$ and $\mathcal{M}$ be the corresponding objects of $N$ and $M$ in the quotient categories. Then we have

{\rm (i)} for each $i\ge0$, $\Ext^i_{\QMod R}(\mathcal{N},\mathcal{M})$ is a right $H$-module,

{\rm(ii)} there are isomorphisms $\Ext^i_{\QMod B}(\mathcal{N},\mathcal{M})\cong\Ext^i_{\QMod R}(\mathcal{N},\mathcal{M})^H,\ \forall\ i\ge0.$}\\

If $R$ is further a noetherian complete semilocal algebra which is also AS-regular of global dimension 2, and $G\subseteq \Aut(R)$ is a finite subgroup, then the invariant subalgebra $R^G$ is Cohen-Macaulay-finite. Indeed, we have the following result (Theorem \ref{thm5.1}), which is a noncommutative version of a classical result for commutative rings due to Auslander \cite{A}.\\

\noindent{\bf Theorem.} {\it Let $R$ be a noetherian complete semilocal algebra, and let $G\subseteq\Aut(R)$ be a finite subgroup. Assume $\ch\kk\nmid|G|$ and $R/R^G$ is a right $\kk G^*$-dense Galois extension and $R$ is AS-regular of global dimension 2. Then the following statements hold.

{\rm(i)} $R^G$ is AS-Cohen-Macaulay of dimension 2.

{\rm(ii)} A finitely generated right $R^G$-module $M$ is Cohen-Macaulay if and only if $M\in \text{\rm add} (R_{R^G})$, where $\text{\rm add} (R_{R^G})$ is the subcategory of $\Mod R^G$ consisting of all the direct summands of finite direct sums of $R_{R^G}$.}\\

When the algebra $R$ is graded, the above result was obtained by several authors (see Remark \ref{rm5.1}). However, the methods applied in graded case can not be applied to nongraded case directly. Our method depends on the relations of extension groups of objects in the quotient categories over $R^G$ and $R\#\kk G$ respectively, which is different from the methods applies in the graded case.

Throughout, $\kk$ is a field. All the algebras and modules are over $\kk$. Unadorned $\otimes$ means $\otimes_{\kk}$. We refer to \cite{HVZ2} for basic properties of Hopf dense Galois extensions.

\section{Hom-sets of the quotient category}\label{sec1}

Let $B$ be a (right) noetherian algebra. We denote by $\Mod B$ the category of right $B$-modules. Let $M$ be a right $B$-module. We say that a submodule $K$ of $M$ is {\it cofinite} if $M/K$ is finite dimensional. For an element $m\in M$, we say that $m$ is an torsion element if $mB$ is finite dimensional. Let $\Gamma_B (M)=\{m\in M|m\text{ is a torsion element}\}$. Then $\Gamma_B(M)$ is a submodule of $M$. Note that $\Gamma_B$ is in fact a functor from $\Mod B$ to itself. The functor $\Gamma_B$ can be represented as $\Gamma_B=\underset{\longrightarrow}\lim\Hom_B(B/K,-)$, where $K$ runs over all the cofinite right ideals of $B$. In the sequel, if there is no risk of confusion, we will omit the subscript in the functor $\Gamma_B$, and simply write as $\Gamma$.

For a right $B$-module, if $\Gamma(M)=0$, then we say that $M$ is {\it torsion free}. If $\Gamma(M)=M$, then $M$ is called a {\it torsion module}.
Let $\Tor B$ be the full subcategory of $\Mod B$ consisting of torsion modules. One sees that $\Tor B$ is a localizing subcategory of $\Mod B$. Hence we have an abelian quotient category $\QMod B:=\frac{\Mod B}{\Tor B}$.
The natural projection functor is denoted by $\pi:\Mod B\to\QMod B$, which has a right adjoint functor $\omega:\QMod B\to \Mod B$. For $N,M\in \Mod B$, we write $\mathcal{N}$ and $\mathcal{M}$ for $\pi(N)$ and $\pi(M)$ respectively in the quotient category. The morphisms in $\QMod B$ is defined to be the set
$$\Hom_{\Mod B}(\mathcal{N},\mathcal{M})=\underset{\longrightarrow}\lim\Hom_B(N',M/\Gamma(M)),$$ where $N'$ runs over all the cofinite submodules of $N$, and the direct system is induced by the inclusion maps of the cofinite submodules.

If $M$ is a torsion free module, then its injective envelope is torsion free as well. The injective envelope of a torsion module is not torsion in general. If $\Tor B$ is closed under taking injective envelope, then we say that $\Tor B$ is {\it stable}. For example, if $B$ is a noetherian commutative local algebra, then $B$ has a stable torsion class.

\begin{lemma} \label{lem0} Let $N$ be a finitely generated right $B$-module, and let $E$ be an injective torsion $B$-module. Then $\underset{\longrightarrow}\lim\Hom_B(N',E)=0$, where $N'$ runs over all the cofinite submodule of $N$.
\end{lemma}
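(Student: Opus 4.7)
The plan is to show that every representative $f \colon N' \to E$ of an element in $\underset{\longrightarrow}\lim \Hom_B(N',E)$ restricts to zero on some smaller cofinite submodule of $N$; by definition of the colimit this forces the element to be zero. The natural candidate for the smaller submodule is $N'' = \ker f$, so the whole argument reduces to showing that $\ker f$ is cofinite in $N'$ (and hence cofinite in $N$, since cofiniteness is transitive: if $N'/\ker f$ and $N/N'$ are both finite dimensional, so is $N/\ker f$).

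To see that $\ker f$ is cofinite in $N'$, note that $N'/\ker f \cong f(N') \subseteq E$, so it suffices to prove $f(N')$ is finite dimensional. First, since $B$ is right noetherian and $N$ is finitely generated, $N$ is a noetherian $B$-module, so its submodule $N'$ is finitely generated; picking generators $x_1,\dots,x_k$ of $N'$ gives $f(N') = \sum_{i=1}^{k} f(x_i)B$. Second, $E$ is torsion, which by the definition recalled in the excerpt means $\dim_{\kk} eB < \infty$ for every $e \in E$. In particular each $f(x_i)B$ is finite dimensional, so $f(N')$ is a finite sum of finite-dimensional subspaces, hence finite dimensional.

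Combining the two observations, $\ker f$ has finite codimension in $N$, so the structural map $\Hom_B(N',E) \to \Hom_B(\ker f, E)$ of the direct system sends $f$ to $0$, which means $[f] = 0$ in the direct limit. There is no real obstacle in this proof; the only point worth flagging is that the injectivity of $E$ is nowhere used---only torsion is needed. Injectivity is presumably included in the statement because the lemma will later be applied to injective envelopes of torsion modules when comparing $\Ext$-groups in the two quotient categories $\QMod R$ and $\QMod B$.
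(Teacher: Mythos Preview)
Your proof is correct and rests on the same key observation as the paper's: any $B$-linear map from a finitely generated module into a torsion module has finite-dimensional image, hence cofinite kernel. The paper packages this slightly differently---it applies $\Hom_B(-,E)$ to $0\to N'\to N\to N/N'\to0$, uses injectivity of $E$ to obtain a short exact sequence of Hom-groups, passes to the direct limit, and then uses the finite-image observation (for $N$ itself) to show the first map in the limit sequence is surjective---whereas you argue directly at the level of representatives in the colimit. Your remark that injectivity is never actually needed is correct and is a small simplification over the paper's argument; the paper uses injectivity only to get right-exactness of the Hom sequence, which your element-level argument bypasses.
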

\begin{proof} For every cofinite submodule $N'$ of $N$, applying the functor $\Hom_B(-,E)$ to the exact sequence $0\to N'\to N\to N/N'\to0$, we obtain the exact sequence $0\to \Hom_B(N/N',E)\to\Hom_B(N,E)\to\Hom_B(N',E)\to0$. Take the direct limit over all the cofinite submodules $N'\subseteq N$, we obtain exact sequence $$0\to \underset{\longrightarrow}\lim\Hom_B(N/N',E)\to\Hom_B(N,E)\to\underset{\longrightarrow}\lim\Hom_B(N',E)\to0.$$ For each $B$-module morphism $f:N\to E$, since $N$ is finitely generated, $\im f$ is finite dimensional. Hence $\ker f$ is cofinite. Since $f$ factors through $N/\ker f$, we see that the morphism $\underset{\longrightarrow}\lim\Hom_B(N/N',E)\to\Hom_B(N,E)$ is an epimorphism, and hence $\underset{\longrightarrow}\lim\Hom_B(N',E)=0$.
\end{proof}

\begin{lemma}\label{lem1} Let $B$ be a noetherian algebra. Assume that $\Tor B$ is stable. For every finitely generated module $N,M\in \Mod B$, if $N$ is finitely generated, then $\Ext^i_{\QMod B}(\mathcal{N},\mathcal{M})=\underset{\longrightarrow}\lim\Ext^i_B(N',M)$ for all $i\ge0$, where $N'$ runs over all the cofinite submodules of $N$.
\end{lemma}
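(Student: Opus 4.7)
The plan is to compute $\Ext^i_{\QMod B}(\mathcal{N},\mathcal{M})$ from a $\Mod B$-injective resolution of $M$, by transporting it through the localization functor $\pi$ and then showing that the resulting term-by-term Hom groups in $\QMod B$ coincide with the entries of the direct system $\underset{\longrightarrow}\lim\Ext^i_B(N',M)$. More concretely, I would fix an injective resolution $0\to M\to E^0\to E^1\to\cdots$ in $\Mod B$, argue that $\pi(E^\bullet)$ is an injective resolution of $\mathcal{M}$ in $\QMod B$, then identify each $\Hom_{\QMod B}(\mathcal{N},\pi(E^i))$ with $\underset{\longrightarrow}\lim\Hom_B(N',E^i)$, and finally exchange cohomology with the filtered colimit over cofinite submodules $N'\subseteq N$.

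The key steps, in order, are as follows. First, use stability of $\Tor B$ to observe that for every injective $B$-module $E$, the torsion submodule $\Gamma(E)$ is itself injective: its injective envelope inside $E$ is torsion by stability and essential over $\Gamma(E)$, so it must equal $\Gamma(E)$. Consequently $E=\Gamma(E)\oplus E/\Gamma(E)$ with $E/\Gamma(E)$ torsion-free and injective. Second, verify that $\pi(E^\bullet)$ is an injective resolution of $\mathcal{M}$: exactness is automatic because $\pi$ is exact; since $\pi$ vanishes on $\Tor B$, $\pi(E^i)\cong\pi(E^i/\Gamma(E^i))$, and $\pi$ carries the torsion-free injective $E^i/\Gamma(E^i)$ to an injective object of $\QMod B$ by standard Gabriel localization theory. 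Third, for each $i$, use the split decomposition of $E^i$ together with Lemma \ref{lem0} (applied to the torsion injective summand $\Gamma(E^i)$) to obtain
\[
\Hom_{\QMod B}(\mathcal{N},\pi(E^i))=\underset{\longrightarrow}\lim\Hom_B(N',E^i/\Gamma(E^i))\cong \underset{\longrightarrow}\lim\Hom_B(N',E^i).
\]
Fourth, because filtered colimits of vector spaces are exact and hence commute with cohomology,
\[
\Ext^i_{\QMod B}(\mathcal{N},\mathcal{M})=H^i\bigl(\underset{\longrightarrow}\lim\Hom_B(N',E^\bullet)\bigr)=\underset{\longrightarrow}\lim H^i\bigl(\Hom_B(N',E^\bullet)\bigr)=\underset{\longrightarrow}\lim\Ext^i_B(N',M).
\]

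The main obstacle is Step 2: showing that $\pi(E^\bullet)$ is genuinely an \emph{injective} resolution in $\QMod B$. Exactness is formal, but injectivity of each $\pi(E^i)$ relies essentially on the stability hypothesis, because without it one cannot split $\Gamma(E^i)$ off and $E^i/\Gamma(E^i)$ need not be injective in $\Mod B$, so one loses a torsion-free injective representative for $\pi(E^i)$ and the comparison in Step 3 breaks down. Once Step 2 is settled, the remaining manipulations — peeling off the torsion summand via Lemma \ref{lem0}, and commuting the filtered colimit past cohomology — are purely formal.
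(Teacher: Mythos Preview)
Your proposal is correct and follows essentially the same approach as the paper: both split each injective $I^i$ as a torsion-free injective summand plus a torsion injective summand via the stability hypothesis, push through $\pi$ to get an injective resolution of $\mathcal{M}$ in $\QMod B$, then use Lemma~\ref{lem0} to reinstate the torsion summand in the direct-limit Hom and commute cohomology with the filtered colimit. The only cosmetic differences are that the paper takes a minimal injective resolution (minimality is not actually used) and states the splitting $I^n=F^n\oplus E^n$ without your explicit argument that stability forces $\Gamma(E)$ to be injective.
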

\begin{proof} Take a minimal injective resolution of $M$ as follows
\begin{equation}\label{eq1}
  0\to M\to I^0\to I^1\to\cdots\to I^n\to\cdots.
\end{equation}
Since $\Tor B$ is stable, for each $n\ge0$, $I^n=F^n\oplus E^n$ where $F^n$ is a torsion free module and $E^n$ is a torsion module. Note that if $F$ is an injective torsion free module, then $\mathcal{F}=\pi(F)$ is an injective object in $\QMod B$. Applying the projection functor to the exact sequence (\ref{eq1}), we obtain an injective resolution of $\mathcal{M}$ in $\QMod B$:
\begin{equation}\label{eq2}
  0\to \mathcal{M}\to \mathcal{F}^0\to \mathcal{F}^1\to\cdots\to \mathcal{F}^n\to\cdots.
\end{equation}
Applying the functor $\Hom_{\QMod B}(\mathcal{N},-)$ to (\ref{eq2}), we have
$$0\to\Hom_{\QMod B}(\mathcal{N},\mathcal{F}^0)\to\cdots\to \Hom_{\QMod B}(\mathcal{N},\mathcal{F}^n)\to\cdots,$$
which is equivalent to the following complex
\begin{equation}\label{eq3}
  0\to\underset{\longrightarrow}{\lim}{\Hom}_{B}(N',F^0)\to\cdots\to \underset{\longrightarrow}{\lim}{\Hom}_{B}(N',F^n)\to\cdots.
\end{equation}
By Lemma \ref{lem0}, the complex (\ref{eq3}) is equivalent to
\begin{equation}\label{eq4}
  0\to\underset{\longrightarrow}{\lim}\Hom_{B}(N',F^0\oplus E^n)\to\cdots\to \underset{\longrightarrow}{\lim}{\Hom}_{B}(N',F^n\oplus E^n)\to\cdots.
\end{equation}
Taking the cohomology of (\ref{eq4}), we obtain the desired result since the direct limit is exact.
\end{proof}

\section{Hom-sets of the quotient categories of smash products}\label{sec2}

Throughout this section, let $R$ be a noetherian algebra, $H$ a finite dimensional semisimple Hopf algebra acting on $R$ so that $R$ is a left $H$-module algebra. Let $B=R\# H$ be the smash product. We may view $R$ as a left $B$-module by setting the left $B$-action 
\begin{equation}\label{eq2.1}
  (a\#h)\cdot r=a(h\cdot r),\ \forall\ a,r\in R, h\in H,
\end{equation}
where $h\cdot r$ is the left $H$-action on $R$. Similarly, we may view $R$ as a right $B$-module by setting the right $B$-action 
\begin{equation}\label{eq2.2}
  r\cdot (a\#h)=(S^{-1}h_{(1)}\cdot r)(h_{(2)}\cdot a),\ \forall\ a,r\in R, h\in H,
\end{equation}
where $S$ is the antipode of $H$ and $\Delta(h)=h_{(1)}\otimes h_{(2)}$.

Let $M$ be a right $B$-module. For every element $x\in M$, we see that $xB$ is finite dimensional if and only if $xR$ is finite dimensional since $H$ is finite dimensional. Hence we see as a right $B$-module $\Gamma_R(M)=\Gamma_B(M)$. From this observation, we obtain the following fact.

\begin{lemma}\label{lemx1} Let $M$ be a right $B$-module. Then as right $R$-modules, $R^i\Gamma_R(M)\cong R^i\Gamma_B(M)$ for all $i\ge0$.
\end{lemma}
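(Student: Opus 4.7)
The plan is to upgrade the set-level identification $\Gamma_R(M)=\Gamma_B(M)$, noted immediately before the lemma, to the level of right derived functors. The crucial ingredient is that $B=R\#H$ is free as a left $R$-module (of rank $\dim_{\kk}H$, via the standard isomorphism $B\cong R\otimes H$ as a left $R$-module), so the induction functor $-\otimes_R B\colon \Mod R\to \Mod B$ is exact. Since induction is left adjoint to the restriction functor $\Mod B\to \Mod R$, restriction must send injective right $B$-modules to injective right $R$-modules.

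First, I would choose an injective resolution of $M$ in $\Mod B$,
\[
0\to M\to I^0\to I^1\to\cdots\to I^n\to\cdots,
\]
and observe that after restriction of scalars this is still an injective resolution of $M$ in $\Mod R$. Hence both $R^i\Gamma_R(M)$ and $R^i\Gamma_B(M)$ can be computed as the $i$-th cohomology of the complexes $\Gamma_R(I^\bullet)$ and $\Gamma_B(I^\bullet)$, respectively.

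Next, I would invoke the observation preceding the lemma: for any $x$ in a right $B$-module $N$, $xB=xRH$ is finite dimensional precisely when $xR$ is, because $H$ is finite dimensional. Applied to each $I^n$, this gives $\Gamma_R(I^n)=\Gamma_B(I^n)$ as subsets, and the right $R$-module structures inherited from $I^n$ obviously agree. The differentials in $\Gamma_R(I^\bullet)$ and $\Gamma_B(I^\bullet)$ are the restrictions of the common differentials of $I^\bullet$, so the two complexes coincide as complexes of right $R$-modules. Taking cohomology then yields $R^i\Gamma_R(M)\cong R^i\Gamma_B(M)$ as right $R$-modules.

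There is no serious obstacle. The only point worth verifying carefully is that restriction preserves injectives, which is the adjunction argument above; once this is in place, the equality of complexes reduces everything to a direct computation of cohomology.
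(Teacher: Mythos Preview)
Your proof is correct and follows essentially the same route as the paper: take an injective resolution of $M$ in $\Mod B$, observe it restricts to an injective resolution in $\Mod R$, and then use $\Gamma_R(I^n)=\Gamma_B(I^n)$ termwise. The only difference is that the paper cites \cite[Proposition~2.6]{HVZ} for the fact that restriction preserves injectives, whereas you supply the (equivalent) adjunction argument directly.
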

\begin{proof} Let $0\to M\to I^0\to I^2\to \cdots$ be an injective resolution of $M$ in the category of right $B$-modules. By \cite[Proposition 2.6]{HVZ}, each $I^i$ is injective as a right $R$-module. Hence $R^i\Gamma_R(M)$ is the $i$th cohomology of the complex $\Gamma_R(I^\cdot)$. Since $\Gamma_R(I^i)=\Gamma_B(M)$ for all $i\ge0$, as complexes of right $R$-modules, $\Gamma_R(I^\cdot)=\Gamma_B(I^\cdot)$. Thus we have $R^i\Gamma_R(M)=R^i\Gamma_B(M)$.
\end{proof}

Let $N$ be a right $R$-module. We define a right $B$-module $N\# H=N\otimes H$ whose right $B$-module action is given by $(n\otimes h)(a\otimes g)=n(h_{(1)}a)\otimes h_{(2)}g$ for $n\in N, g,h\in H$.

\begin{lemma}\label{lem2} The following statements hold.

{\rm(i)} If $\Tor B$ is stable, then so is $\Tor R$.

{\rm(ii)} Let $A$ and $A'$ be two noetherian algebras. If $F:\Mod A\longrightarrow\Mod A'$ is an equivalence of abelian categories, then the restriction of $F$ to $\Tor A$ induces an equivalence $\Tor A\longrightarrow\Tor A'$.
\end{lemma}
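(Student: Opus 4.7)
The plan is to handle (i) and (ii) by different methods: for (i) I would transfer the injective envelope from $\Mod B$ back to $\Mod R$ using the auxiliary $B$-module $N\# H$ just introduced, and for (ii) I would use Morita/Eilenberg--Watts to write $F$ as a tensor product and check it preserves finite dimensionality.

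For (i), given $M\in\Tor R$, I would form the right $B$-module $M\# H$. The formula $(m\otimes h)(a\# g)=m(h_{(1)}a)\otimes h_{(2)}g$ shows that the cyclic $B$-submodule of any $m\otimes h$ lies inside $mR\otimes H$, which is finite dimensional since $M$ is torsion and $H$ is finite dimensional; hence $M\# H$ is torsion in $\Mod B$. By the stability hypothesis for $\Tor B$, its $B$-injective envelope $E:=E_B(M\# H)$ is torsion in $\Mod B$, and therefore torsion in $\Mod R$ as well, because (as noted before Lemma~\ref{lemx1}) $\Gamma_R$ and $\Gamma_B$ agree on $B$-modules. By \cite[Proposition~2.6]{HVZ}, $E$ remains injective when restricted to $\Mod R$. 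A short check using $\Delta(1)=1\otimes 1$ shows that $m\mapsto m\otimes1$ is an $R$-linear monomorphism $M\hookrightarrow M\# H\subseteq E$; since $E$ is $R$-injective, this extends to an $R$-embedding $E_R(M)\hookrightarrow E$, whence $E_R(M)$ is a submodule of a torsion $R$-module, so itself torsion. Thus $\Tor R$ is stable.

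For (ii), by the Eilenberg--Watts and Morita theorems, the equivalence $F$ is naturally isomorphic to $-\otimes_A P$, where $P=F(A)$ is an $A$-$A'$-bimodule which is a progenerator on both sides, in particular finitely generated projective as a left $A$-module. Consequently $P$ is a direct summand of ${}_AA^n$ for some $n$, so for any finite-dimensional right $A$-module $N$, $F(N)=N\otimes_AP$ is a direct summand of $N\otimes_AA^n\cong N^n$, and is therefore finite dimensional. Since every torsion module is a direct limit of its finite-dimensional cyclic submodules and $F$ preserves direct limits (having both adjoints), $F(\Tor A)\subseteq\Tor A'$. Applying the same argument to a quasi-inverse of $F$ gives the reverse inclusion, so $F$ restricts to the desired equivalence $\Tor A\simeq\Tor A'$.

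The main subtlety I anticipate is in (i), where one must identify the correct auxiliary $B$-module $M\# H$ and verify the $R$-linearity of the otherwise innocent map $m\mapsto m\otimes1$ under the twisted $B$-action. In (ii) nothing is genuinely hard once one invokes Morita theory to present $P$ as a summand of ${}_AA^n$; one should only remark that the equivalence, being an equivalence of abelian categories enriched in $\kk$-vector spaces via the centres of the two module categories, is automatically $\kk$-linear, so that "finite dimensional over $\kk$" is a meaningful invariant to transport.
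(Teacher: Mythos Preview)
Your proof of (i) is exactly the paper's: build $M\#H$, use stability of $\Tor B$ to get a torsion $B$-injective envelope, restrict it to $\Mod R$ via \cite[Proposition~2.6]{HVZ}, and embed $M$ by $m\mapsto m\otimes1$. For (ii) the paper only writes ``This is classical'', so your Morita/Eilenberg--Watts argument actually supplies the omitted detail; just replace ``finite-dimensional cyclic submodules'' by ``finitely generated submodules'' (the former need not be directed under inclusion), and note that the $\kk$-linearity of $F$ is best taken as the standing hypothesis of the paper rather than deduced from the centre argument you sketch, since an abstract equivalence need not fix the copy of $\kk$ inside $Z(A)\cong Z(A')$.
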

\begin{proof} (i) Let $N$ be a torsion $R$-module. Set $M=N\# H$. Then $M$ is $B$-torsion module. Let $I$ be the injective envelope of $M$. Since $\Tor B$ is stable, $I$ is a torsion $B$-module. By \cite[Proposition 2.6]{HVZ}, $I$ is injective as a right $R$-module. We view $N$ as an $R$-submodule of $M$ through the inclusion map $N\to M: n\mapsto n\#1$. Then $N$ is a submodule of the torsion injective module $I$. Hence the injective envelope of $N$ must be torsion.

(ii) This is classical.
\end{proof}

\begin{proposition}\label{prop1} Let $H$ be a finite dimensional semisimple and cosemisimple Hopf algebra. Let $R$ be a noetherian left $H$-module algebra. Then $\Tor R$ is stable if and only if $\Tor B$ is stable.
\end{proposition}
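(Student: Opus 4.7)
The backward direction is precisely Lemma \ref{lem2}(i), which requires only the finite-dimensionality of $H$, so I concentrate on the forward direction. Assume $\Tor R$ is stable, and let $M$ be an arbitrary torsion right $B$-module; the goal is to show that the $B$-injective envelope of $M$ is again torsion. From the discussion preceding Lemma \ref{lemx1}, a right $B$-module is $B$-torsion if and only if it is $R$-torsion under restriction, since $H$ is finite dimensional. In particular $M_R$ is $R$-torsion, so its injective envelope $E$ in $\Mod R$ is also $R$-torsion by the stability of $\Tor R$. The plan is to construct a $B$-injective torsion $B$-module $\tilde{E}$ that contains $M$ as a $B$-submodule; then the $B$-injective envelope of $M$ will be a direct summand of $\tilde{E}$, and hence torsion.

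Take $\tilde{E} := E\# H \cong E\otimes_R B$ with its canonical right $B$-module structure. Two properties of $\tilde{E}$ are needed. First, $\tilde{E}$ is $R$-torsion (equivalently $B$-torsion): a direct computation of the induced right $R$-action gives $(e\otimes h)\cdot r = \sum e(h_{(1)}\cdot r)\otimes h_{(2)}$, so the cyclic submodule $(e\otimes h)\cdot R$ is contained in the finite-dimensional subspace $(eR)\otimes H$, where $eR$ is finite dimensional because $E$ is $R$-torsion. Second, $\tilde{E}$ is $B$-injective: for any finite-dimensional Hopf algebra $H$, the extension $R\subseteq R\# H$ is a Frobenius extension, so the induction functor $-\otimes_R B$ is naturally isomorphic to the coinduction functor $\Hom_R(B,-)$; the latter is right adjoint to the exact restriction functor and therefore preserves injectives.

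It remains to embed $M$ into $\tilde{E}$ as a $B$-submodule. The multiplication map $\mu\colon M_R\otimes_R B\to M$ is a surjective $B$-homomorphism admitting the $R$-linear section $m\mapsto m\otimes 1$. By Maschke's theorem for the smash product $B=R\# H$, which is where the semisimplicity of $H$ enters, any short exact sequence of right $B$-modules that splits over $R$ also splits over $B$; hence $\mu$ has a $B$-linear section $\sigma\colon M\hookrightarrow M_R\otimes_R B$. Composing $\sigma$ with the $B$-linear inclusion $M_R\otimes_R B\hookrightarrow E\otimes_R B=\tilde{E}$ induced by $M_R\hookrightarrow E$ (together with the $R$-flatness of $B$) yields the desired $B$-linear monomorphism $M\hookrightarrow\tilde{E}$. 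The main obstacles lie in the two properties of $\tilde{E}$: the $B$-injectivity rests on the Frobenius-extension structure of $R\subseteq R\# H$, while the $B$-linear embedding rests on Maschke's theorem; the cosemisimplicity of $H$ does not appear to be essential for this direction and is presumably included for symmetry with the rest of the paper.
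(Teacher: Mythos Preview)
Your argument is correct, but it follows a genuinely different route from the paper's. The paper exploits Blattner--Montgomery duality: viewing $B=R\#H$ as a left $H^*$-module algebra with $B^{H^*}=R$, the extension $R\subseteq B$ is $H$-Galois, so $B\#H^*$ is Morita equivalent to $R$; stability then transfers from $\Tor R$ to $\Tor(B\#H^*)$ via Lemma~\ref{lem2}(ii), and finally down to $\Tor B$ by applying Lemma~\ref{lem2}(i) with $H^*$ in place of $H$. The cosemisimplicity hypothesis enters precisely at this last step, since the standing assumption of the section is that the acting Hopf algebra be semisimple.

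Your approach is more direct: you build a torsion $B$-injective hull for $M$ by hand, using the Frobenius property of $R\subseteq R\#H$ to see that induction preserves injectives, and the relative Maschke theorem to embed $M$ into $M_R\otimes_R B$. This avoids the duality and Morita machinery entirely, and---as you correctly observe---shows that cosemisimplicity is not actually needed for this implication. So your proof is slightly sharper in its hypotheses, while the paper's proof is shorter and highlights the symmetry between $H$ and $H^*$ that pervades the Hopf--Galois framework.
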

\begin{proof} We only need to prove the ``only if'' part. Note that $B=R\# H$ is a right $H$-comodule algebra. It is a left $H^*$-module algebra. The invariant subalgebra $B^{H^*}=R$. Since $B$ is a right $H$-Galois extension of $R$, $R$ and $B\#H^*$ are Morita equivalent (cf. \cite[Theorem 1.2]{CFM}). Since $\Tor R$ is stable, $\Tor B\#H^*$ is stable by Lemma \ref{lem2}(ii). By Lemma \ref{lem2}(i), $\Tor B$ is stable.
\end{proof}

Let $N$ and $M$ be right $B$-modules. Note that $\Hom_R(N,M)$ has a right $H$-module action defined as follows: for $f\in \Hom_R(N,M), h\in H$,
\begin{equation}\label{eq5}
  (f\leftharpoonup h)(n)=f(nS(h_{(1)}))h_{(2)},\quad \forall\ n\in N.
\end{equation}
Under this $H$-action on $\Hom_R(N,M)$, we have the following isomorphism \cite{CFM}
\begin{equation}\label{eq6}
  \Hom_B(N,M)\cong \Hom_R(N,M)^H.
\end{equation}
The right $H$-action on $\Hom_R(N,M)$ may be extended to the extension groups by choosing a projective resolution of $N$ (or injective resolution of $M$) in $\Mod B$. Moreover, we have the following isomorphisms \cite{HVZ}
\begin{equation}\label{eq7}
  \Ext^i_B(N,M)\cong \Ext^i_R(N,M)^H, \ \forall\ i\ge0.
\end{equation}

\begin{definition}\label{def2.1} We say that the left $H$-module algebra $R$ is (right) {\it admissible} if for any finitely generated right $B$-module $N$, and any cofinite $R$-submodule $K$ of $N$, there is a cofinite $B$-submodule $K'$ of $N$ such that $K'\subseteq K$.
\end{definition}

There are several natural classes of noetherian admissible noetherian $H$-module algebras.

\begin{example}\label{ex1} Let $R$ be a noetherian semilocal algebra, that is, $R/J$ is a finite dimensional semisimple algebra where $J$ is the Jacobson radical of $R$. Let $G$ be a finite group which acts on $R$ so that $R$ is a left $G$-module algebra. Let $B=R\#\kk G$. We know that $J$ is stable under the $G$-action. Let $N$ be a finitely generated right $B$-module, and let $K$ be an $R$-submodule such that $\overline{N}:=N/K$ is finite dimensional. Assume $\overline{N}\neq0$. Note that $\overline{N}J\neq \overline{N}$. Since $\overline{N}$ is finite dimensional, there is an integer $k$ such that $\overline{N}J^k=0$. Hence $NJ^k\subseteq K$. For $x\in N$, $r\in J^k$ and $g\in G$, we have $(xr)g=(xg)(g^{-1}(r))\in NJ^k$. Hence $NJ^k$ is a $B$-submodule of $N$. Since $N/(NJ^k)$ is a finitely generated module over $R/J^k$, it is finite dimensional. Hence the $G$-module algebra $R$ is admissible.

If $R$ is also complete with respect to the $J$-adic topology and $\ch \kk\nmid |G|$, then $\Tor B$ is stable. Indeed, a right $B$-module $M$ is torsion if and only if $M$ is torsion as a right $R$-module. Since $R$ is complete, by \cite[Theorem 1.1]{J}, $J$ has the Atin-Rees property which insures that the injective envelope of a torsion $R$-module is still torsion. Since $\kk G$ is both semisimple and cosemisimple, $\Tor B$ is stable by Proposition \ref{prop1}.
\end{example}

\begin{remark} In Definition \ref{def2.1}, if $R$ is a graded algebra and the $H$-action is homogeneous, then we say that the left graded $H$-module algebra $R$ is admissible if the above conditions hold for finitely generated graded modules.
\end{remark}

\begin{example} \label{ex2} Let $R=R_0\oplus R_1\oplus \cdots$ be a graded noetherian algebra such that $\dim R_i<\infty$ for all $i\ge0$. Let $H$ be a finite dimensional semisimple Hopf algebra which acts on $R$ homogeneously so that $R$ is an $H$-module algebra. Set $B=R\#H$. Let $N$ be a graded finitely generated right $B$-module, and $K$ a graded $R$-submodule of $N$ such that $N/K$ is finite dimensional. Let $\mfm=\oplus_{i\ge1}R_i$. Since $N/K$ is finite dimensional, there is an integer $k$ such that $(N/K)\mfm^k=0$. Hence $N\mfm^k\subseteq K$. Since $\mfm$ is stable under the $H$-action, $N\mfm^k$ is a $B$-submodule of $N$. As $R$ is noetherian, $R/\mfm^k$ is finite dimensional for all $k\ge0$. Hence $N/(N\mfm^k)$ is finite dimensional.
It follows that the left $H$-module algebra $R$ is admissible in the graded sense.
\end{example}

\begin{lemma} \label{lem6} Let $R$ be an admissible $H$-module algebra. Let $N$ be a finitely generated right $B$-module, and $M$ a right $R$-module. Then $$\Hom_{\QMod R}(\mathcal{N},\mathcal{M})=\underset{\longrightarrow}{\lim}\Hom_R(K,M),$$ where $K$ runs over all the cofinite $B$-submodules of $N$.
\end{lemma}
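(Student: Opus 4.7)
The plan is to identify two computations of $\Hom_{\QMod R}(\mathcal{N},\mathcal{M})$: the definition gives $\underset{\longrightarrow}{\lim}_{N'}\Hom_R(N',M/\Gamma_R(M))$ with $N'$ running over cofinite $R$-submodules of $N$, while the lemma claims this equals $\underset{\longrightarrow}{\lim}_K\Hom_R(K,M)$ with $K$ running over cofinite $B$-submodules. I would proceed in two reductions: first replace the indexing system of cofinite $R$-submodules by cofinite $B$-submodules via admissibility; then replace the target $M/\Gamma_R(M)$ by $M$ using the short exact sequence $0\to\Gamma_R(M)\to M\to M/\Gamma_R(M)\to 0$.

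The first reduction is immediate: admissibility says that every cofinite $R$-submodule $N'$ of $N$ contains a cofinite $B$-submodule $K\subseteq N'$, so the cofinite $B$-submodules form a cofinal subsystem of the cofinite $R$-submodules, and hence
\[
\Hom_{\QMod R}(\mathcal{N},\mathcal{M})=\underset{\longrightarrow}{\lim}_K\Hom_R(K,M/\Gamma_R(M)).
\]
For the second reduction, apply $\Hom_R(K,-)$ to $0\to\Gamma_R(M)\to M\to M/\Gamma_R(M)\to 0$ and pass to the direct limit over cofinite $B$-submodules $K\subseteq N$. The key vanishing is $\underset{\longrightarrow}{\lim}_K\Hom_R(K,\Gamma_R(M))=0$: any $R$-linear $f\colon K\to\Gamma_R(M)$ has finitely generated torsion image, hence finite-dimensional image, so $\ker f$ is a cofinite $R$-submodule of $K$, and admissibility supplies a cofinite $B$-submodule $K'\subseteq\ker f$ on which $f$ vanishes. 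This yields injectivity of the comparison map $\underset{\longrightarrow}{\lim}_K\Hom_R(K,M)\to\underset{\longrightarrow}{\lim}_K\Hom_R(K,M/\Gamma_R(M))$.

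For surjectivity, given $g\colon K\to M/\Gamma_R(M)$ I would construct a cofinite $B$-submodule $K'\subseteq K$ together with a lift $\tilde g\colon K'\to M$ of $g|_{K'}$. Choose a finitely generated $R$-submodule $P_0\subseteq M$ whose image in $M/\Gamma_R(M)$ contains $g(K)$; the intersection $T_0:=P_0\cap\Gamma_R(M)$ is finitely generated and torsion, hence finite-dimensional, so $\operatorname{ann}_R(T_0)$ is a cofinite right ideal of $R$. Applying admissibility to the $B$-module $R$ produces an $H$-stable cofinite right ideal $J'\subseteq\operatorname{ann}_R(T_0)$, and $K':=KJ'$ is then a cofinite $B$-submodule of $K$ (the $H$-stability of $J'$ makes $KJ'$ closed under the right $H$-action on $K$). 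A lift is given by $\tilde g(kj'):=\tilde k\cdot j'$ for any choice of preimage $\tilde k\in P_0$ of $g(k)$; the ambiguity in $\tilde k$ lies in $T_0$ and is annihilated by $J'$, so the recipe is independent of the choice of lift. The main obstacle I anticipate is verifying that this prescription yields a genuinely well-defined $R$-linear map out of the quotient $KJ'$ and not merely out of $K\otimes_R J'$; this requires that the relations defining $KJ'$ (controlled by $\Tor_1^R(K,R/J')$) lift compatibly to $M$, and the $H$-stability of $J'$ together with the $B$-module structure of $K$ should be decisive here.
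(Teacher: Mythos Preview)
The paper's proof is much shorter and takes a different route: it first invokes Lemma~\ref{lem1} (the case $i=0$) to obtain
\[
\Hom_{\QMod R}(\mathcal{N},\mathcal{M})=\underset{\longrightarrow}{\lim}\Hom_R(N',M)
\]
with $N'$ ranging over cofinite $R$-submodules of $N$, and then uses admissibility exactly as you do to pass to cofinite $B$-submodules by cofinality. Thus the replacement of $M/\Gamma_R(M)$ by $M$ is already absorbed into Lemma~\ref{lem1}, whose proof works with an injective resolution of $M$ and Lemma~\ref{lem0}: one computes against \emph{injective} torsion modules $E^n$ (where Lemma~\ref{lem0} gives the vanishing) rather than against the non-injective module $\Gamma_R(M)$. (Strictly speaking the citation of Lemma~\ref{lem1} imports the hypotheses that $\Tor R$ is stable and $M$ is finitely generated, which are not in the statement of the present lemma; in every application in the paper those hypotheses are in force.)

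Your direct approach via the short exact sequence $0\to\Gamma_R(M)\to M\to M/\Gamma_R(M)\to 0$ is natural, and your injectivity argument is fine. But the obstacle you flag in the surjectivity step is a genuine gap, and the fix you propose does not close it. The well-definedness of $\tilde g$ on $KJ'$ is a purely $R$-module-theoretic question: if $\sum_i k_i j'_i=0$ in $K$ you need $\sum_i\tilde k_i j'_i=0$ in $P_0$, and all you can conclude is that this sum lies in $T_0\cap P_0J'$, which need not vanish (already $T_0J'=0$ does not force $T_0\cap P_0J'=0$; take $P_0=R=\kk[x]/(x^2)$ and $T_0=J'=(x)$). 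The $H$-stability of $J'$ and the $B$-module structure on $K$ are irrelevant here: the obstruction is the class of the pulled-back extension $0\to T_0\to K\times_{g(K)}P_0\to K\to 0$ in $\Ext^1_R(K,T_0)$, and to kill it after restriction to a cofinite submodule one typically embeds $T_0$ into its injective hull and uses that this hull is again torsion---that is, one uses stability of $\Tor R$, precisely the ingredient that drives the paper's Lemma~\ref{lem1}. So your second reduction cannot be completed along the lines you sketch; the paper circumvents it entirely by passing through injectives.
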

\begin{proof} By Lemma \ref{lem1}, $\Hom_{\QMod R}(\mathcal{N},\mathcal{M})=\underset{\longrightarrow}{\lim}\Hom_R(K,M)$ where the limit runs over all the cofinite $R$-submodules $K$ of $N$. Since the $H$-module algebra $R$ is assumed to be admissible, the direct system formed by all the cofinite $R$-submodules of $N$ and the direct system formed by all the cofinite $B$-submodules of $N$ are cofinal. Hence $\underset{\longrightarrow}{\lim}\Hom_R(K,M)=\underset{\longrightarrow}{\lim}\Hom_R(K',M)$, where on the left hand the direct limit runs over all the cofinite $R$-submodules $K$ of $N$, and on the right hand the direct limit runs over all the cofinite $B$-submodules $K'$ of $N$.
\end{proof}

\begin{proposition} \label{prop2}  Let $R$ be an admissible $H$-module algebra. Let $N$ and $M$ be finitely generated right $B$-modules. Then there is a natural right $H$-module action on $\Hom_{\QMod R}(\mathcal{N},\mathcal{M})$. Moreover, with this $H$-module structure we have  $$\Hom_{\QMod B}(\mathcal{N},\mathcal{M})\cong\Hom_{\QMod R}(\mathcal{N},\mathcal{M})^H,$$ where the isomorphism is functorial in $M$.
\end{proposition}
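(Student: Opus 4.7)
The plan is to realize both sides as filtered colimits indexed by the same family of cofinite $B$-submodules of $N$, and then apply the classical pointwise identity $\Hom_B(K,M)\cong\Hom_R(K,M)^H$ of \eqref{eq6}. First, by Lemma \ref{lem6} (which uses admissibility of $R$), we have $\Hom_{\QMod R}(\mathcal{N},\mathcal{M})=\underset{\longrightarrow}{\lim}\,\Hom_R(K,M)$, where $K$ runs over the cofinite $B$-submodules of $N$. On the other hand, Lemma \ref{lem1} applied to $B$ (using the standing hypothesis that $\Tor B$ is stable) gives $\Hom_{\QMod B}(\mathcal{N},\mathcal{M})=\underset{\longrightarrow}{\lim}\,\Hom_B(K,M)$ over the same filtered index system.

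Next, I equip $\Hom_{\QMod R}(\mathcal{N},\mathcal{M})$ with its $H$-action. For each cofinite $B$-submodule $K\subseteq N$, the module $K$ is automatically $H$-stable, so $\Hom_R(K,M)$ inherits the right $H$-action of \eqref{eq5}. If $K'\subseteq K$ is a further cofinite $B$-submodule, then the restriction map $\Hom_R(K,M)\to\Hom_R(K',M)$ is $H$-equivariant, because $K'$ is also $H$-stable. Hence the colimit $\underset{\longrightarrow}{\lim}\,\Hom_R(K,M)$ carries a well-defined natural right $H$-action, which proves the first assertion.

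Now apply the isomorphism \eqref{eq6} levelwise to get $\Hom_B(K,M)\cong\Hom_R(K,M)^H$ for every $K$. Since $H$ is finite dimensional and semisimple, the invariant functor $(-)^H$ is computed by acting with a normalized integral of $H$, so it is exact and commutes with filtered colimits of $H$-modules. Combining this with the identifications from the first step gives
\[
\Hom_{\QMod B}(\mathcal{N},\mathcal{M})\cong \underset{\longrightarrow}{\lim}\,\Hom_R(K,M)^H\cong\bigl(\underset{\longrightarrow}{\lim}\,\Hom_R(K,M)\bigr)^H=\Hom_{\QMod R}(\mathcal{N},\mathcal{M})^H,
\]
which is the second claim. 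Functoriality in $M$ is then automatic: a $B$-module map $M\to M'$ induces $H$-equivariant maps on each $\Hom_R(K,-)$, and \eqref{eq6} is natural in its second argument, so every identification above passes through such a map.

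The main obstacle I expect is keeping the two natural direct systems compatible: $\QMod R$ is most naturally described via cofinite $R$-submodules, while the $H$-action of \eqref{eq5} only makes sense on cofinite $B$-submodules. Admissibility of the $H$-module algebra $R$ is precisely the hypothesis that makes these two systems cofinal, allowing one to transfer the $H$-structure to $\Hom_{\QMod R}(\mathcal{N},\mathcal{M})$ and run the entire argument in a single filtered colimit.
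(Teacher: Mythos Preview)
Your proof is correct and follows essentially the same route as the paper: realize both sides as colimits over cofinite $B$-submodules via Lemma~\ref{lem6}, endow each $\Hom_R(K,M)$ with the $H$-action \eqref{eq5}, apply \eqref{eq6} levelwise, and pass $(-)^H$ through the filtered colimit using semisimplicity. One small point: you invoke Lemma~\ref{lem1} and ``the standing hypothesis that $\Tor B$ is stable'' to get the colimit description of $\Hom_{\QMod B}(\mathcal{N},\mathcal{M})$, but that stability hypothesis is not part of Proposition~\ref{prop2} (it is only imposed later in the section); the paper instead reads $\Hom_{\QMod B}(\mathcal{N},\mathcal{M})=\varinjlim\Hom_B(K,M)$ directly from the definition of morphisms in the quotient category, which needs no stability (replacing $M$ by $M/\Gamma_B(M)=M/\Gamma_R(M)$ if necessary, which affects both sides identically).
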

\begin{proof} By Lemma \ref{lem6}, $\Hom_{\QMod R}(\mathcal{N},\mathcal{M})=\underset{\longrightarrow}{\lim}\Hom_R(K,M)$, where each $K$ in the direct system is a $B$-submodule of $N$. By (\ref{eq5}), $\Hom_R(K,M)$ is a right $H$-module for each cofinite $B$-submodule $K$. It is easy to see that the direct system is compatible with the right $H$-module actions. Hence $\Hom_{\QMod R}(\mathcal{N},\mathcal{M})$ is a right $H$-module.

For each $B$-submodule $K$ of $N$, we have $\Hom_B(K,M)\cong \Hom_R(K,M)^H$ by (\ref{eq6}). Since $H$ is semisimple, the functor $(\ )^H$ commutates with taking direct limits. Hence
\begin{eqnarray*}
\Hom_{\QMod B}(\mathcal{N},\mathcal{M})&=&\underset{\longrightarrow}{\lim}\Hom_B(K,M)\\
&\cong&\underset{\longrightarrow}{\lim}\Hom_R(K,M)^H\\
&\cong&(\underset{\longrightarrow}{\lim}\Hom_R(K,M))^H\\
&\cong&\Hom_{\QMod R}(\mathcal{N},\mathcal{M})^H.
\end{eqnarray*}
If $M'$ is another $B$-module and there is a right $B$-module morphism $f:M\to M'$, then it is easy to see that the direct systems and the right $H$-module structures on $\Hom_R(K,M)$ and $\Hom_R(K,M')$ are compatible with the morphism $f$. Hence the isomorphism is functorial in $M$.
\end{proof}

Next we show that the proposition above can be extended to extension groups, that is, there are natural $H$-module actions on extension groups $\Ext_{\QMod R}^i(\mathcal{N},\mathcal{M})$ if the left $H$-module $R$ is admissible and $\Tor B$ is stable.

For the rest of this section, the $H$-module algebra $R$ is admissible and $\Tor B$ is stable. Let $M$ be a finitely generated right $B$-module. Take a minimal injective resolution of $M$ as follows:
\begin{equation}\label{eq8}
  0\to M\to I^0\to I^1\to\cdots\to I^k\to\cdots.
\end{equation}
Since $\Tor B$ is stable, the injective envelope of a torsion module is still torsion. Then each injective module in the above sequence has a decomposition $I^i=E^i\oplus T^i$ such that $E^i$ is a torsion free $B$-module and $T^i$ is a torsion $B$-module. Write $I^\cdot$ for the complex obtained from (\ref{eq8}) by dropping $M$ on the left in the beginning. Similar to the discussions in \cite[Section 7, P.271]{AZ}, we see that there are complexes $E^\cdot$, $T^\cdot$ and a morphism $f:E^\cdot[-1]\to T^\cdot$ such that $I^\cdot=cone(f)$. Applying the projection functor $\pi:\Mod B\to \QMod B$ to the resolution (\ref{eq8}) and noticing that $E^i$ is torsion free injective for all $i\ge0$, we obtain an injective resolution of $\mathcal{M}$ in $\QMod B$:
\begin{equation}\label{eq9}
  0\to \mathcal{M}\to \mathcal{E}^0\to \mathcal{E}^1\to\cdots\to\mathcal{E}^i\to\cdots.
\end{equation}
Note that any right $B$-module is also a right $R$-module. Moreover, a right $B$-module $K$ is torsion free if and only if it is torsion free as an $R$-module. We see that $E^i$ is also torsion free when it is viewed as an $R$-module, and $T^i$ is torsion as an $R$-module for all $i\ge0$. By \cite[Proposition 2.6]{HVZ}, $E^i$ is also injective as a right $R$-module for each $i\ge0$. Hence we have the following observation.

\begin{lemma}\label{lem7} Assume that $\Tor B$ is stable. The sequence (\ref{eq9}) is an injective resolution of $\mathcal{M}$ in $\QMod B$. If $\mathcal{M}$ and $\mathcal{E}^i$ $(i\ge0)$ are viewed as objects in $\QMod R$ (by an abuse of the notions), then (\ref{eq9}) is also an injective resolution of $\mathcal{M}$ in $\QMod R$.
\end{lemma}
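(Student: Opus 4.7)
The plan is to verify two things for each level $i$: that $\mathcal{E}^i$ is injective in the relevant quotient category, and that the sequence (\ref{eq9}) is exact. Most of the ingredients are already in place in the excerpt, so the proof will essentially consist of assembling them.

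First, I would handle the $\QMod B$ claim. The projection $\pi_B$ is exact, and since $T^i \in \Tor B$, we have $\pi_B(I^i) = \pi_B(E^i \oplus T^i) = \pi_B(E^i) = \mathcal{E}^i$. Exactness of (\ref{eq9}) in $\QMod B$ therefore follows at once from exactness of (\ref{eq8}) and of $\pi_B$. For injectivity of $\mathcal{E}^i$ in $\QMod B$, I would appeal to the fact (already recalled just above Lemma \ref{lem1}) that the image under $\pi_B$ of a torsion-free injective $B$-module is an injective object of $\QMod B$; this is exactly the situation for $E^i$.

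Next, I would treat the $\QMod R$ claim. The crucial observation, already made at the beginning of Section \ref{sec2}, is that for any right $B$-module the functors $\Gamma_R$ and $\Gamma_B$ coincide. Consequently the $B$-decomposition $I^i = E^i \oplus T^i$ is simultaneously a decomposition into an $R$-torsion-free piece and an $R$-torsion piece. By \cite[Proposition 2.6]{HVZ} (already used in Lemma \ref{lemx1} and Lemma \ref{lem2}), the torsion-free $B$-summand $E^i$ remains injective when restricted along $R \hookrightarrow B$. Hence $\mathcal{E}^i$, viewed in $\QMod R$, is the image under $\pi_R$ of a torsion-free injective $R$-module, and the same general fact about localization at a torsion subcategory makes $\mathcal{E}^i$ injective in $\QMod R$.

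For the exactness of (\ref{eq9}) in $\QMod R$: the resolution (\ref{eq8}) is in particular an exact sequence of right $R$-modules (forgetting the $H$-action along the inclusion $R \hookrightarrow B$), and the $R$-projection $\pi_R$ is exact, so applying it yields an exact sequence in $\QMod R$; since the torsion summands $T^i$ are killed by $\pi_R$ as well, this exact sequence is precisely (\ref{eq9}) interpreted in $\QMod R$.

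Nothing here is really a genuine obstacle; the only point that requires a moment's care is the interchange of the two torsion theories, which is the content of the identity $\Gamma_R = \Gamma_B$ on $\Mod B$ together with \cite[Proposition 2.6]{HVZ}. Once that is invoked, the rest is the exactness of $\pi_R$ and $\pi_B$ and the vanishing of these functors on $\Tor R$ and $\Tor B$ respectively.
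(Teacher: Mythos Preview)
Your proposal is correct and follows essentially the same route as the paper. The paper's justification is the paragraph immediately preceding the lemma: it uses the stable decomposition $I^i=E^i\oplus T^i$, the fact that torsion-free injectives project to injectives in the quotient, the identity $\Gamma_R=\Gamma_B$ on $B$-modules, and \cite[Proposition 2.6]{HVZ} to see that each $E^i$ is also $R$-injective --- exactly as you do. The only cosmetic difference is that the paper records the more explicit description $I^\cdot=\mathrm{cone}(f)$ for some $f:E^\cdot[-1]\to T^\cdot$ (citing \cite[Section 7]{AZ}) before applying $\pi$, whereas you pass directly through exactness of $\pi$ and the vanishing $\pi(T^i)=0$; both yield the same conclusion.
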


\begin{theorem} \label{thm0} Assume that $R$ is an admissible $H$-module algebra and $\Tor B$ is stable. Let $N$ and $M$ be finitely generated right $B$-modules. Then for each $i\ge0$, $\Ext^i_{\QMod R}(\mathcal{N},\mathcal{M})$ is a right $H$-module, moreover, we have the following isomorphisms
$$\Ext^i_{\QMod B}(\mathcal{N},\mathcal{M})\cong\Ext^i_{\QMod R}(\mathcal{N},\mathcal{M})^H,\ \forall\ i\ge0.$$
\end{theorem}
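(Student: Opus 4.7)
The plan is to use the injective resolution of $\mathcal{M}$ constructed via Lemma~\ref{lem7} in order to simultaneously compute both Ext groups, and then apply the degree-zero comparison from Proposition~\ref{prop2} term by term. Explicitly, I start by taking a minimal injective resolution $0\to M\to I^0\to I^1\to \cdots$ of $M$ in $\Mod B$, and use the stability of $\Tor B$ to decompose each $I^i=E^i\oplus T^i$ with $E^i$ torsion free and $T^i$ torsion. Lemma~\ref{lem7} then asserts that $0\to \mathcal{M}\to \mathcal{E}^0\to \mathcal{E}^1\to \cdots$ is an injective resolution of $\mathcal{M}$ \emph{both} in $\QMod B$ and in $\QMod R$. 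Consequently
\[
\Ext^i_{\QMod B}(\mathcal{N},\mathcal{M})=H^i\bigl(\Hom_{\QMod B}(\mathcal{N},\mathcal{E}^\cdot)\bigr),\quad
\Ext^i_{\QMod R}(\mathcal{N},\mathcal{M})=H^i\bigl(\Hom_{\QMod R}(\mathcal{N},\mathcal{E}^\cdot)\bigr).
\]

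Next, I transfer the right $H$-module structure from degree zero to higher degrees. For each $i\ge0$, Proposition~\ref{prop2} provides a natural right $H$-action on $\Hom_{\QMod R}(\mathcal{N},\mathcal{E}^i)$, and the isomorphism $\Hom_{\QMod B}(\mathcal{N},\mathcal{E}^i)\cong \Hom_{\QMod R}(\mathcal{N},\mathcal{E}^i)^H$ is functorial in the second variable. Applied to the coboundary morphisms of $\mathcal{E}^\cdot$, this functoriality shows that the differentials of the complex $\Hom_{\QMod R}(\mathcal{N},\mathcal{E}^\cdot)$ are $H$-linear, so that the cohomology $\Ext^i_{\QMod R}(\mathcal{N},\mathcal{M})$ inherits a right $H$-module structure, proving part (i).

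For part (ii), since $H$ is semisimple the invariants functor $(-)^H$ is exact and therefore commutes with taking cohomology. Combining this with the term-wise isomorphism of Proposition~\ref{prop2} yields
\[
\Ext^i_{\QMod B}(\mathcal{N},\mathcal{M})
=H^i\bigl(\Hom_{\QMod R}(\mathcal{N},\mathcal{E}^\cdot)^H\bigr)
\cong H^i\bigl(\Hom_{\QMod R}(\mathcal{N},\mathcal{E}^\cdot)\bigr)^H
=\Ext^i_{\QMod R}(\mathcal{N},\mathcal{M})^H,
\]
as required.

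The main obstacle I anticipate is verifying, cleanly and in the absence of $B$-projective resolutions on the source, that the resolution from Lemma~\ref{lem7} really does compute $\Ext^i_{\QMod R}(\mathcal{N},\mathcal{M})$ (this requires that each $\mathcal{E}^i$ be injective in $\QMod R$, which reduces to showing $E^i$ is $R$-injective, a point that hinges on \cite[Proposition~2.6]{HVZ} combined with the torsion-freeness of $E^i$ as an $R$-module). A secondary technical point is confirming $H$-equivariance of the differentials: it is not automatic from Proposition~\ref{prop2} applied to a single object, and must be obtained by invoking the functoriality of the isomorphism in the second variable so that the morphisms $\mathcal{E}^i\to \mathcal{E}^{i+1}$ intertwine the $H$-actions on $\Hom_{\QMod R}(\mathcal{N},-)$ coming from formula (\ref{eq5}).
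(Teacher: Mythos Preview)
Your proposal is correct and follows essentially the same approach as the paper: both proofs invoke Lemma~\ref{lem7} to obtain a single injective resolution $\mathcal{E}^\cdot$ of $\mathcal{M}$ valid in $\QMod B$ and $\QMod R$ simultaneously, apply Proposition~\ref{prop2} termwise (using its functoriality in the second variable to get $H$-linearity of the differentials), and then use the exactness of $(-)^H$ for semisimple $H$ to pass invariants through cohomology. The two technical obstacles you flag are exactly the ones the paper addresses in the paragraph preceding Lemma~\ref{lem7} and via the functoriality clause of Proposition~\ref{prop2}.
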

\begin{proof} By Lemma \ref{lem7}, the sequence (\ref{eq9}) is an injective resolution of $\mathcal{M}$ when viewed as an object in $\QMod R$. Applying $\Hom_{\QMod R}(\mathcal{N},-)$ to (\ref{eq9}), we obtain
{\tiny \begin{equation*}
  0\to \Hom_{\QMod R}(\mathcal{N},\mathcal{E}^0)\to \Hom_{\QMod R}(\mathcal{N},\mathcal{E}^1)\to\cdots\to\Hom_{\QMod R}(\mathcal{N},\mathcal{E}^i)\to\cdots.
\end{equation*}}
By Proposition \ref{prop2}, each component of the complex above is a right $H$-module and the differential is compatible with the right $H$-module structures. Taking the cohomology of the complex above, we obtain that $\Ext^i_{\QMod R}(\mathcal{N},\mathcal{M})$ is a right $H$-module for each $i\ge0$.

Now applying $\Hom_{\QMod B}(\mathcal{N},-)$ to the sequence (\ref{eq9}), we obtain the following complex
{\tiny \begin{equation}\label{eq10}
  0\to \Hom_{\QMod B}(\mathcal{N},\mathcal{E}^0)\to \Hom_{\QMod B}(\mathcal{N},\mathcal{E}^1)\to\cdots\to\Hom_{\QMod B}(\mathcal{N},\mathcal{E}^i)\to\cdots.
\end{equation}}
By Proposition \ref{prop2}, the complex (\ref{eq10}) is isomorphic to the following complex
{\tiny \begin{equation}\label{eq11}
  0\to \Hom_{\QMod R}(\mathcal{N},\mathcal{E}^0)^H\to \Hom_{\QMod R}(\mathcal{N},\mathcal{E}^1)^H\to\cdots\to\Hom_{\QMod R}(\mathcal{N},\mathcal{E}^i)^H\to\cdots.
\end{equation}}
By Lemma \ref{lem7}, (\ref{eq9}) is also an injective resolution of $\mathcal{M}$ in $\QMod R$. Noticing that the functor $(\ )^H$ commutes with taking the cohomology, we obtain that the $i$th cohomology of the complex (\ref{eq11}) is equal to $\Ext_{\QMod R}^i(\mathcal{N},\mathcal{M})^H$, which is isomorphic to $\Ext^i_{\QMod B}(\mathcal{N},\mathcal{M})$, the $i$th cohomology of (\ref{eq10}) since (\ref{eq9}) is also an injective resolution of $\mathcal{M}$ in $\QMod B$. Then we obtain the desired isomorphisms.
\end{proof}

\section{Invariant subalgebras of Hopf dense Galois extensions}\label{sec3}

Throughout this section, $H$ is a finite dimensional semisimple Hopf algebra and $R$ is a noetherian $H$-module algebra. As before, set $B:=R\#H$ and $A=R^H$.

\begin{setup}\label{setup} We assume that the $H$-module algebra $R$ satisfies the following conditions:

 (i) the $H$-module algebra $R$ is admissible and $\Tor B$ is stable (e.g. $R$ is a complete semilocal algebra, cf. Example \ref{ex1});

(ii) $R/A$ is a right $H^*$-dense Galois extension.
\end{setup}

As we have known in the beginning of Section \ref{sec2}, $R$ is a $B$-$A$-bimodule. Then we have a functor $$-\otimes_BR:\Mod B\longrightarrow \Mod A,$$ which induces a functor between the corresponding quotient categories:
$$-\otimes_{\mathcal{B}}\mathcal{R}:\QMod B\longrightarrow \QMod A.$$
We recall a result obtained in \cite{HVZ}.

\begin{lemma} \label{lem3.1} \cite[Theorem 2.4]{HVZ} Assume that Setup \ref{setup}(ii) holds. Then the functor $$-\otimes_{\mathcal{B}}\mathcal{R}:\QMod B\longrightarrow \QMod A$$ is an equivalence of abelian categories.
\end{lemma}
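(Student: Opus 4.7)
The plan is to exhibit $-\otimes_B R$ as one leg of a Morita-type equivalence holding up to finite-dimensional error, then verify the error is invisible in $\QMod$. I would assemble the Hopf--Galois Morita context between $A$ and $B$ determined by $R$: the $B$--$A$-bimodule $Q=R$ with left action (\ref{eq2.1}), the $A$--$B$-bimodule $P=R$ with right action (\ref{eq2.2}), a ``Galois'' map $\tau:Q\otimes_A P\to B$ (which encodes $\beta$ after identifying $R\#H\cong R\otimes H^*$ as right $R$-modules), and a ``trace'' map $\mu:P\otimes_B Q\to A$ built from a normalized integral in $H$, available because $H$ is semisimple. In the strict $H^*$-Galois situation both $\tau$ and $\mu$ are bijective and Morita theory yields an honest equivalence $\Mod B\simeq\Mod A$; here we only have a dense version and need to promote it to an equivalence at the level of $\QMod$.

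Next I would verify that the would-be inverse pair $(-\otimes_B R,\ -\otimes_A P)$ preserves torsion subcategories. Density of $\beta$ together with noetherianness forces $R$ to be finitely generated as both a right and a left $A$-module: a finite-dimensional complement to $\im\beta$ in $R\otimes H^*$ lifts to $A$-generators of $R$. Finite generation of $R$ on the relevant side then ensures that tensoring a torsion $B$- or $A$-module with $R$ (or $P$) yields a module in which every cyclic submodule is finite-dimensional, so both functors descend to $\QMod B$ and $\QMod A$.

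The third step is to show the two composites become the identity after descending. The map $\tau:Q\otimes_A P\to B$ has finite-dimensional kernel and cokernel, both torsion $B$-bimodules. For any finitely generated right $B$-module $M$, $M\otimes_B\ker\tau$ and $M\otimes_B\coker\tau$ are finite-dimensional: a finite-dimensional bimodule $X$ has cofinite right annihilator $I\subseteq B$, and $M\otimes_B X$ is a quotient of the finite-dimensional space $(M/MI)\otimes_{\kk} X$. Therefore $\pi(M\otimes_B\tau)$ is an isomorphism in $\QMod B$, and the direct-limit description of Hom-sets from Section~\ref{sec1} extends this to arbitrary $M$. A symmetric argument using $\mu$ handles $\QMod A$, and the two isomorphisms together with the adjunction structure of the bimodule functors give the required equivalence.

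The principal obstacle is upgrading the one-sided density assumption ``$\coker\beta$ is finite-dimensional'' to the fourfold statement that $\ker\tau$, $\coker\tau$, $\ker\mu$, $\coker\mu$ are all finite-dimensional. Translating $\beta$ into $\tau$ is bookkeeping, and finite-dimensionality of $\ker\tau$ follows from noetherianness applied to the sequence $0\to\ker\tau\to Q\otimes_A P\to B$, whose middle term is finitely generated. For $\mu$, the normalized integral splits the trace so that $\im\mu$ is cofinite in $A$, and noetherianness again controls $\ker\mu$. The categorical formalism is essentially automatic once this technical bookkeeping is in place; that is where the real content of the result lies.
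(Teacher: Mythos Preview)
The paper does not prove this lemma at all; it is quoted as \cite[Theorem 2.4]{HVZ2} and no argument is given here. So there is no ``paper's own proof'' to compare against, only the referenced one. At the level of strategy your outline---build the Morita context for the $B$-$A$-bimodule $R$, check that the unit and counit maps $\tau$ and $\mu$ have torsion kernel and cokernel, and conclude that the induced functors on $\QMod$ are mutually inverse---is the expected approach and almost certainly matches what is done in \cite{HVZ2}.

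There is, however, a real gap in your execution. You claim that ``finite-dimensionality of $\ker\tau$ follows from noetherianness applied to the sequence $0\to\ker\tau\to Q\otimes_A P\to B$, whose middle term is finitely generated,'' and likewise that ``noetherianness again controls $\ker\mu$.'' Noetherianness only tells you that $\ker\tau$ is \emph{finitely generated} as a one-sided $B$-module; it gives no bound on its $\kk$-dimension. A finitely generated module over an infinite-dimensional noetherian ring is typically infinite-dimensional, and the dense Galois hypothesis speaks only of $\coker\beta$, not $\ker\beta$. This is precisely the ``principal obstacle'' you flag at the end, and the resolution you offer does not work. To repair the argument you need either a separate mechanism forcing $\ker\tau$ to be torsion (e.g.\ a faithfulness or reflexivity argument, or a snake-lemma trick expressing $\ker\tau$ in terms of objects built from $\coker\tau$), or a reorganization that establishes the equivalence on $\QMod$ using only cokernel control---for instance by showing the induced functor is fully faithful and essentially surjective directly, without ever isolating $\ker\tau$. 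Your one-line justification that $R$ is finitely generated over $A$ is also too compressed to stand on its own; this is a genuine ingredient (it is what makes $-\otimes_A R$ preserve torsion) and deserves an actual argument.
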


Let $S$ be a noetherian algebra. Recall from Section \ref{sec1} that we have a torsion functor $\Gamma_S:\Mod S\to\Mod S$. The $i$-th right derived functor of $\Gamma_S$ is denoted by $R^i\Gamma_S$. If there is no risk of confusion, we will drop the subscript $S$. Note that we have $\Gamma=\underset{\longrightarrow}\lim\Hom_S(R/K,-)$, and $R^i\Gamma=\underset{\longrightarrow}\lim\Ext^i_S(R/K,-)$, where $K$ runs over all the cofinite $S$-submodules of $S$, and the direct system is induced by the inclusion maps of cofinite submodules.

\begin{definition}\label{def3.1}
We say that a noetherian algebra $S$ is (right) {\it AS-Cohen-Macaulay} of dimension $d$ if $R^i\Gamma(S)=0$ for all $i\neq d$.

Assume $S$ is AS-Cohen-Macaulay of dimension $d$. A finitely generated right $S$-module $M$ is said to be a {\it Cohen-Macaulay} module, if $R^i\Gamma(M)=0$ for all $i\neq d$
\end{definition}

The definition of an AS-Cohen-Macaulay module is a generalization of the commutative version (cf. \cite{CH,Z}). The letters ``AS'' stand for Artin-Schelter, because the property in the definition is also a generalization of that of Artin-Schelter Gorenstein algebras (cf. \cite[Remark 8.5]{VdB}).

\begin{lemma} \label{lem3.2} Let $S$ be a noetherian algebra such that $\Tor S$ is stable. For a finitely generated right $S$-module, we have

{\rm(i)} $\Ext_{\QMod S}^i(\mathcal{S},\mathcal{M})\cong R^{i+1}\Gamma(M)$ for $i>0$, and

{\rm(ii)} an exact sequence $0\to \Gamma(M)\to M\overset{\phi}\to \Hom_{\QMod S}(\mathcal{S},\mathcal{M})\to R^1\Gamma(M)\to0$, where $\phi$ is the natural map $M=\Hom_S(S,M)\overset{\pi}\to\Hom_{\QMod S}(\mathcal{S},\mathcal{M})$ induced by the projection functor $\pi:\Mod S\to\QMod S$.
\end{lemma}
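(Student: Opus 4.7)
The plan is to compare the ordinary Ext groups $\Ext^i_S(K,M)$ and $\Ext^{i+1}_S(S/K,M)$ via the long exact sequence associated with $0\to K\to S\to S/K\to 0$, and then pass to the filtered colimit over cofinite right ideals $K\subseteq S$. First I would record the two computations that bracket the argument: by Lemma \ref{lem1} applied with $N=S$ (which is finitely generated and for which $\Tor S$ is assumed stable), one has $\Ext^i_{\QMod S}(\mathcal{S},\mathcal{M})\cong \underset{\longrightarrow}{\lim}\Ext^i_S(K,M)$ for all $i\ge 0$, while by definition $R^i\Gamma(M)\cong \underset{\longrightarrow}{\lim}\Ext^i_S(S/K,M)$, with $K$ running over cofinite right ideals in both cases.

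Next, applying $\Hom_S(-,M)$ to $0\to K\to S\to S/K\to 0$ and using that $S_S$ is projective (so $\Ext^i_S(S,M)=0$ for $i\ge 1$), one extracts a four-term exact sequence
\begin{equation*}
0\to \Hom_S(S/K,M)\to M\to \Hom_S(K,M)\to \Ext^1_S(S/K,M)\to 0
\end{equation*}
together with isomorphisms $\Ext^i_S(K,M)\cong \Ext^{i+1}_S(S/K,M)$ for every $i\ge 1$, all natural in $K$.

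Now I would pass to the direct limit over the filtered system of cofinite right ideals of $S$. Since filtered colimits in $\Mod S$ are exact, the four-term sequence is preserved, and for $i\ge 1$ the isomorphism $\Ext^i_S(K,M)\cong \Ext^{i+1}_S(S/K,M)$ gives (i) after combining with the two identifications recorded above. For the sequence in (ii), one uses that $\underset{\longrightarrow}{\lim}\Hom_S(S/K,M)=\Gamma(M)$ by definition, $\underset{\longrightarrow}{\lim}\Hom_S(K,M)=\Hom_{\QMod S}(\mathcal{S},\mathcal{M})$ by Lemma \ref{lem1} for $i=0$, and $\underset{\longrightarrow}{\lim}\Ext^1_S(S/K,M)=R^1\Gamma(M)$ by definition.

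The only subtle point is to identify the middle map in (ii) with the map $\phi$ induced by $\pi$. Tracing the long exact sequence, the map $M\to \Hom_S(K,M)$ at finite level is restriction of $\mathrm{id}_M\in \Hom_S(S,M)$ to $K$; taking the limit, this is precisely the canonical map $M=\Hom_S(S,M)\to \underset{\longrightarrow}{\lim}\Hom_S(K,M)\cong \Hom_{\QMod S}(\mathcal{S},\mathcal{M})$, which by construction of the quotient category is the map $\pi$ on Hom-sets. I expect this compatibility with $\pi$ (rather than any deep homological input) to be the main bookkeeping obstacle; everything else is a formal consequence of the exactness of filtered colimits plus Lemma \ref{lem1}.
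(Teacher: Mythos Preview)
Your proposal is correct and follows essentially the same argument as the paper: apply $\Hom_S(-,M)$ to $0\to K\to S\to S/K\to 0$, extract the four-term sequence and the isomorphisms $\Ext^i_S(K,M)\cong\Ext^{i+1}_S(S/K,M)$ for $i\ge1$, then take the filtered colimit over cofinite right ideals and invoke Lemma~\ref{lem1} together with $R^i\Gamma=\underset{\longrightarrow}\lim\Ext^i_S(S/K,-)$. Your extra paragraph verifying that the limit of the restriction maps coincides with the map induced by $\pi$ is more explicit than the paper, which simply notes that $\phi$ is induced by the inclusions $K\hookrightarrow S$.
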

\begin{proof} For any cofinite right $S$-submodule $K$ of $S$, we have an exact sequence $0\to K\to S\to S/K\to0$, which implies isomorphisms $$\Ext^i_{S}(K,M)\cong\Ext^{i+1}_S(S/K,M),\ \forall\ i\ge1,$$ and an exact sequence
$$0\to \Hom_S(S/K,M)\to M\overset{\phi}\to \Hom_{S}(K,M)\to \Ext^1_S(S/K,M)\to0,$$ where $\phi$ is the natural map induced by the inclusion map $K\hookrightarrow S$.
Taking direct limit on all the cofinite submodules of $S$, by Lemma \ref{lem1} and $R^i\Gamma(M)=\underset{\longrightarrow}\lim\Ext^i_S(S/K,-)$ for all $i\ge0$, we obtain the desired results.
\end{proof}

\begin{theorem} \label{thm1} Assume that $R$ and $H$ satisfy the conditions in Setup \ref{setup}. If $R$ is AS-Cohen-Macaulay of dimension $d$, then so is $R^H$.
\end{theorem}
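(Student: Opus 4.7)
Plan. The task is to prove $R^i\Gamma_A(A)=0$ for every $i\ne d$, where $A=R^H$. My strategy is to push the local cohomology of $A$ into Ext-groups in $\QMod A$ via Lemma \ref{lem3.2}, transport across the equivalence $F:=-\otimes_{\mathcal B}\mathcal R\colon\QMod B\xrightarrow{\sim}\QMod A$ of Lemma \ref{lem3.1}, and then reduce via Lemma \ref{lemx1} and the freeness of $B=R\#H$ over $R$ to the vanishing of $R^i\Gamma_R(R)$, which holds by hypothesis.

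The key structural input is the Reynolds decomposition: since $H$ is semisimple, a normalized two-sided integral $\Lambda\in H$ gives an $A$-bimodule splitting $\pi_\Lambda\colon R\to A$, $r\mapsto\Lambda\cdot r$, of $A\hookrightarrow R$, so that $R=A\oplus C$ as right $A$-modules and $\mathcal R_A=\mathcal A\oplus\mathcal C$ in $\QMod A$. Since $F(\mathcal B)=\mathcal R_A$ (from $B\otimes_B R=R$), the object $\mathcal A$ is thereby a direct summand of $F(\mathcal B)$. Setting $e:=1\#\Lambda\in B$, one checks $e^2=e$ and $eBe\cong A$ as algebras using $h\Lambda=\epsilon(h)\Lambda=\Lambda h$; formula (\ref{eq2.2}) further shows that left multiplication by $e$ on $\mathcal B$ transports under $F$ to the Reynolds projection on $\mathcal R_A$, so $F^{-1}(\mathcal A)$ is the image of the right ideal $eB$ in $\QMod B$.

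For $i\ge 2$, Lemma \ref{lem3.2}(i) gives $R^i\Gamma_A(A)\cong\Ext^{i-1}_{\QMod A}(\mathcal A,\mathcal A)$, which via the Reynolds summand is a direct summand of
\[
\Ext^{i-1}_{\QMod A}(\mathcal R_A,\mathcal R_A)\cong\Ext^{i-1}_{\QMod B}(\mathcal B,\mathcal B)\cong R^i\Gamma_B(B)\cong R^i\Gamma_R(B)\cong R^i\Gamma_R(R)^{\oplus\dim H},
\]
the successive isomorphisms coming from $F$, Lemma \ref{lem3.2}(i) for $S=B$, Lemma \ref{lemx1}, and the freeness of $B$ as a right $R$-module; the last group vanishes for $i\ne d$ by Definition \ref{def3.1}. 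For the low-degree cases, Lemma \ref{lem3.2}(ii) for $S=A$ gives
\[
0\to\Gamma_A(A)\to A\to\End_{\QMod A}(\mathcal A)\to R^1\Gamma_A(A)\to 0,
\]
and the identification $\End_{\QMod A}(\mathcal A)\cong e\cdot\End_{\QMod B}(\mathcal B)\cdot e$, combined with the analogous four-term sequence for $S=B$, yields the following. When $d\ge 2$ one has $\End_{\QMod B}(\mathcal B)=B$, hence $\End_{\QMod A}(\mathcal A)=eBe=A$ with the canonical map equal to the identity, forcing $\Gamma_A(A)=R^1\Gamma_A(A)=0$. The cases $d=0,1$ are treated identically but require only the injectivity of the canonical map $A\to\End_{\QMod A}(\mathcal A)$, which follows from the injectivity of $B\to\End_{\QMod B}(\mathcal B)$ and restriction to the corner.

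Main obstacle. The high-degree step is essentially a mechanical chase through the three named lemmas once the Reynolds summand is in place. The delicate part is the low-degree identification $\End_{\QMod A}(\mathcal A)\cong eBe=A$ with the canonical map matching the inclusion $A\cong eBe\hookrightarrow B$; this requires careful tracking of the idempotent $e$ through the equivalence $F$ and the four-term sequence of Lemma \ref{lem3.2}(ii), and it is exactly where the admissibility of $R$ and stability of $\Tor B$ from Setup \ref{setup} become essential, preventing torsion discrepancies in the quotient categories from contaminating the identification.
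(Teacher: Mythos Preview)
Your argument is correct and takes a genuinely different route from the paper's.

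The paper identifies $\mathcal A$ with $\mathcal R\in\QMod B$ (where $R$ carries the right $B$-structure (\ref{eq2.2})) and then invokes Theorem~\ref{thm0} to obtain $\Ext^i_{\QMod A}(\mathcal A,\mathcal A)\cong\Ext^i_{\QMod R}(\mathcal R,\mathcal R)^H$; the low-degree case is handled by applying $(-)^H$ to the natural map $R\to\Hom_{\QMod R}(\mathcal R,\mathcal R)$. You instead identify $\mathcal R_A$ with $\mathcal B\in\QMod B$, split off $\mathcal A$ via the Reynolds idempotent $e=1\#\Lambda$, and reduce to $R^i\Gamma_B(B)\cong R^i\Gamma_R(R)^{\oplus\dim H}$ by Lemma~\ref{lemx1} and the freeness of $B$ over $R$. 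The high-degree step is then a pure summand argument, and the low-degree step is a corner calculation $e(\,\cdot\,)e$. The paper's route is slightly cleaner in low degree (no idempotent tracking), while yours is more elementary in that it bypasses Theorem~\ref{thm0} entirely.

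Two small corrections. First, the action you need when checking that left multiplication by $e$ on $\mathcal B$ transports to the Reynolds projection on $\mathcal R_A$ is the \emph{left} $B$-action (\ref{eq2.1}), not (\ref{eq2.2}); with (\ref{eq2.1}) one gets $e\cdot r=(1\#\Lambda)\cdot r=\Lambda\cdot r$ as desired. Second, your remark that admissibility is ``exactly where'' the argument needs Setup~\ref{setup}(i) is not accurate: admissibility enters the paper only through Theorem~\ref{thm0} (via Lemma~\ref{lem6} and Proposition~\ref{prop2}), and your route never touches those results. What your argument actually shares with the paper's is the application of Lemma~\ref{lem3.2} with $S=A=R^H$, which formally asks for $\Tor A$ to be stable; this is not part of Setup~\ref{setup}, so both proofs carry the same implicit hypothesis at that point.
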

\begin{proof} Set $A:=R^H$. By Lemma \ref{lem3.1}, the functor $-\otimes_{\mathcal{B}}\mathcal{R}:\QMod B\longrightarrow \QMod A$ is an equivalence of abelian categories. Under this equivalence, the object $\mathcal{R}\in\QMod B$ corresponds to $\mathcal{A}\in \QMod A$. Then we have
\begin{equation}\label{eq3.1}
  \Ext^i_{\QMod A}(\mathcal{A},\mathcal{A})\cong \Ext^i_{\QMod B}(\mathcal{R},\mathcal{R}),\ \forall\ i\ge0.
\end{equation}
Combining (\ref{eq3.1}) with Theorem \ref{thm0}, we obtain
\begin{equation}\label{eq3.2}
  \Ext^i_{\QMod A}(\mathcal{A},\mathcal{A})\cong \Ext^i_{\QMod R}(\mathcal{R},\mathcal{R})^H,\ \forall\ i\ge0.
\end{equation}
Assume $d\ge2$. Then $\Ext^i_{\QMod R}(\mathcal{R},\mathcal{R})=0$ for all $i\neq d-1$ and $i\ge1$ by Lemma \ref{lem3.2}(i), and hence $\Ext^i_{\QMod A}(\mathcal{A},\mathcal{A})=0$ for all $i\neq d-1$ and $i\ge1$. By Lemma \ref{lem3.2}(i) again, $R^{i+1}\Gamma_A(A)=0$ for $i\neq d-1$ and $i\ge1$. Since $R^i\Gamma_R(R)=0$ for $i\neq d$, Lemma \ref{lem3.2}(ii) implies that the natural map $R\to \Hom_{\QMod R}(\mathcal{R},\mathcal{R})$ is an isomorphism. Applying the functor $(\ )^H$ on this isomorphism, we see that the natural map $A\to \Hom_{\QMod A}(\mathcal{A},\mathcal{A})$ is an isomorphism. Hence $R^i\Gamma_A(A)=0$ for $i=0,1$.

Now assume $d=1$. Similar to the above discussions, we have $R^i\Gamma_A(A)=0$ for $i\ge2$. Since $\Gamma_R(R)=0$, Lemma \ref{lem3.2}(ii) implies that the natural map $R\overset{\phi}\to \Hom_{\QMod R}(\mathcal{R},\mathcal{R})$ is injective. Applying the functor $(\ )^H$ to this map, we obtain that the natural map $A\overset{\phi}\to \Hom_{\QMod A}(\mathcal{A},\mathcal{A})$ is injective. Hence $\Gamma_A(A)=0$. The case that $d=0$ can be proved similarly.
\end{proof}

\section{Cohen-Macaulay property of $R$ as an $R^H$-module}\label{sec4}

Keep the same notions as in Section \ref{sec3}. We recall some properties of modules over a Hopf algebra.

Let $X$ be a right $H$-module. The tensor product $X\otimes H$ has two right $H$-module structures. The first one is the diagonal action of $H$, that is, $(x\otimes h)\cdot g=x\cdot g_{(1)}\otimes hg_{(2)}$ for $x\in X$, $g,h\in H$. To avoid possible confusion, the tensor product $X\otimes H$ with the diagonal action of $H$ will be denoted by $X\hat\otimes H$. The other $H$-action is the multiplication of elements of $H$ on the right of $X\otimes H$, that is, $(x\otimes h)\cdot g=x\otimes hg$ for $x\in X$ and $g,h\in H$. This right $H$-module structure will be denoted by $X\otimes H_\bullet$. The following lemma is well known.

\begin{lemma} \label{lem3.3} Let $X$ be a right $H$-module. Define maps $$\varphi:x\hat\otimes H\longrightarrow M\otimes H_\bullet,\ x\otimes h\mapsto x S(h_{(1)})\otimes h_{(2)},$$
and $$\psi:X\otimes H_\bullet\longrightarrow X\hat\otimes H,\ x\otimes h\mapsto xh_{(1)}\otimes h_{(2)}.$$
Then $\varphi$ and $\psi$ are $H$-module isomorphisms which are inverse to each other.
\end{lemma}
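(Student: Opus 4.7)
The plan is to verify by direct Sweedler-notation computation the four elementary assertions: (a) $\varphi$ intertwines the diagonal $H$-action with the right-multiplication $H$-action, (b) $\psi$ intertwines the right-multiplication $H$-action with the diagonal $H$-action, (c) $\varphi\circ\psi=\mathrm{id}$, and (d) $\psi\circ\varphi=\mathrm{id}$. The only tools needed are the fact that $\Delta$ is an algebra homomorphism, coassociativity, and the two antipode identities $\sum h_{(1)}S(h_{(2)})=\epsilon(h)\,1 = \sum S(h_{(1)})h_{(2)}$.

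For (a), I would expand $\varphi((x\otimes h)\cdot_{\hat\otimes} g)=\varphi(xg_{(1)}\otimes hg_{(2)})$, apply $\Delta$ to $hg_{(2)}$ using that $\Delta$ is multiplicative, and use $S(h_{(1)}g_{(2)})=S(g_{(2)})S(h_{(1)})$. After regrouping one lands on $xg_{(1)}S(g_{(2)})S(h_{(1)})\otimes h_{(2)}g_{(3)}$. The key reduction is the identity $\sum g_{(1)}S(g_{(2)})\otimes g_{(3)} = 1\otimes g$, obtained from coassociativity plus the first antipode axiom applied in the inner variables. This collapses the expression to $xS(h_{(1)})\otimes h_{(2)}g$, which equals $\varphi(x\otimes h)\cdot g$ under right multiplication. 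For (b) the analogous computation is shorter: $\psi(x\otimes hg)=x(hg)_{(1)}\otimes(hg)_{(2)}=xh_{(1)}g_{(1)}\otimes h_{(2)}g_{(2)}$, which is exactly $\psi(x\otimes h)\cdot_{\hat\otimes} g$.

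For the inverse identities, (c) comes from $\varphi(\psi(x\otimes h))=\varphi(xh_{(1)}\otimes h_{(2)})=xh_{(1)}S(h_{(2)})\otimes h_{(3)}$, which again uses $\sum h_{(1)}S(h_{(2)})\otimes h_{(3)}=1\otimes h$ to yield $x\otimes h$. Similarly (d) uses $\psi(\varphi(x\otimes h))=xS(h_{(1)})h_{(2)}\otimes h_{(3)}$ together with the dual identity $\sum S(h_{(1)})h_{(2)}\otimes h_{(3)}=1\otimes h$.

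There is no real obstacle; every step is a direct instance of the Hopf algebra axioms and the only thing to be careful about is the bookkeeping of Sweedler indices when pulling subscripts through the tensor factor. Because the statement is noted as well known, in practice I would simply record the two maps, point out that the calculations above are straightforward verifications using the antipode axioms, and leave the routine Sweedler manipulation to the reader.
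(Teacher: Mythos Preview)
Your proposal is correct; every step is a routine Sweedler computation using exactly the Hopf algebra axioms you name, and the four verifications (a)--(d) go through as written. The paper itself gives no proof of this lemma at all---it simply declares the result ``well known'' and states it without argument---so your write-up is in fact more detailed than what the paper provides, and is precisely the standard verification one would supply if asked to fill in the omitted proof.
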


Let $X$ be a right $H$-module. Recall from the beginning of Section \ref{sec2} that $R$ is a right $B$-module by the right $B$-action (\ref{eq2.2}). Then $\Hom_R(X,R)$ is a right $H$-module. Note that $B$ itself is a right $B$-module. Hence $\Hom_R(X,B)$ is a right $H$-module.

\begin{lemma}\label{lem3.4} Let $X$ be a right $B$-module. Then we have an isomorphism of right $H$-modules
$$\Hom_R(X,R)\hat\otimes H\cong\Hom_R(X,R\#H).$$
\end{lemma}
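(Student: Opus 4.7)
The plan is to exhibit the explicit map
$$\Phi: \Hom_R(X,R)\hat\otimes H \longrightarrow \Hom_R(X,R\#H), \qquad \Phi(f\otimes h)(x) \;=\; \sum h_{(1)}\cdot f(x)\#h_{(2)},$$
where $h_{(1)}\cdot f(x)$ is the left $H$-action on $R$, and to verify that it is a right $H$-module isomorphism.

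To see that $\Phi(f\otimes h)$ actually lies in $\Hom_R(X,R\#H)$, one unwinds the smash product action $(a\#k)\cdot r = a(k_{(1)}\cdot r)\# k_{(2)}$ together with the $H$-module algebra axiom $k\cdot(ab) = \sum (k_{(1)}\cdot a)(k_{(2)}\cdot b)$: both $\Phi(f\otimes h)(xr)$ and $\Phi(f\otimes h)(x)\cdot r$ reduce to the common expression $\sum (h_{(1)}\cdot f(x))(h_{(2)}\cdot r)\# h_{(3)}$. Bijectivity will then follow from factoring $\Phi$ through the auxiliary map $\beta: R\otimes H \to R\#H$, $\beta(a\otimes h) = \sum h_{(1)}\cdot a\# h_{(2)}$, where $R\otimes H$ carries the straight right $R$-action $(a\otimes h)\cdot r = ar\otimes h$. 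The same module-algebra calculation as above shows $\beta$ is right $R$-linear, and a direct check produces the two-sided inverse $\beta^{-1}(a\# h) = \sum S^{-1}(h_{(1)})\cdot a\otimes h_{(2)}$, using the Hopf algebra identities $\sum S^{-1}(h_{(2)})h_{(1)} = \varepsilon(h) = \sum h_{(2)}S^{-1}(h_{(1)})$ (obtained by applying $S^{-1}$ to the antipode axiom). Since $\dim_{\kk} H<\infty$, there is the standard vector-space isomorphism $\Hom_R(X,R)\otimes H\cong \Hom_R(X, R\otimes H)$, $f\otimes h\mapsto (x\mapsto f(x)\otimes h)$, and composing this with the iso $F\mapsto \beta\circ F$ recovers $\Phi$, which is therefore bijective.

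The last and hardest step is right $H$-equivariance. The domain carries the diagonal action $(f\otimes h)\leftharpoonup g = (f\leftharpoonup g_{(1)})\otimes hg_{(2)}$ (the defining feature of $\hat\otimes$), and the codomain carries the action induced by (\ref{eq5}) from the right $B$-module $R\#H$. After unpacking $f\leftharpoonup g_{(1)}$ via (\ref{eq5}) together with the right $H$-action on $R$ coming from (\ref{eq2.2}) (namely $r\cdot g = S^{-1}(g)\cdot r$), both $\Phi((f\leftharpoonup g_{(1)})\otimes hg_{(2)})(x)$ and $(\Phi(f\otimes h)\leftharpoonup g)(x)$ become sums indexed by the four-fold iterated coproduct $\Delta^{3}(g)$, and their equality reduces to the Sweedler identity
$$\sum_{(g)} g_{(3)}S^{-1}(g_{(2)})\otimes g_{(1)}\otimes g_{(4)} \;=\; 1\otimes \sum_{(g)} g_{(1)}\otimes g_{(2)},$$
a consequence of applying $\sum k_{(2)}S^{-1}(k_{(1)}) = \varepsilon(k)$ to the inner pair $(g_{(2)}, g_{(3)})$ together with coassociativity. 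Managing this Sweedler gymnastics is the main obstacle of the proof.
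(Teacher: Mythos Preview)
Your proof is correct and follows essentially the same route as the paper: your map $\Phi$ is the paper's $\zeta$, and your factorization through the right $R$-module isomorphism $\beta: R\otimes H \to R\#H$ is exactly the paper's factorization $\zeta = \Hom_R(X,\xi)\circ\theta$, where the paper packages $R\otimes H$ with the right $R$-action $(a\otimes h)r = ar\otimes h$ as the opposite smash product $H\#R$ and your $\beta$ is their algebra isomorphism $\xi(h\#r)=h_{(1)}r\#h_{(2)}$. The $R$-linearity and $H$-equivariance verifications (including the Sweedler cancellation $g_{(3)}S^{-1}(g_{(2)}) = \varepsilon(g_{(2)})$) are identical in both arguments.
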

\begin{proof} Firstly, note that the smash product may be written as $H\#R$ with the multiplication defined by $$(h\#r)(g\#r')=hg_{(2)}\#(S^{-1}g_{(1)}r)r'.$$ There is an isomorphism of algebras: $$\xi:H\#R\to R\#H,\ h\#r\mapsto h_{(1)}r\#h_{(2)}.$$
Define a linear map
\begin{equation*}
  \zeta:\Hom_R(X,R)\hat\otimes H\longrightarrow \Hom(X,R\#H)
\end{equation*}
by $\zeta(f\otimes h)(x)=h_{(1)}f(x)\# h_{(2)}$ for $f\in\Hom_R(X,R)$, $x\in X$ and $h\in H$. We first show that $\zeta(f\otimes h)$ is a right $R$-module morphism for every $f\in \Hom_R(X,R)$ and $h\in H$. For $r\in R$ and $x\in X$, we have
\begin{eqnarray*}
\zeta(f\otimes h)(xr)&=&h_{(1)}f(xr)\# h_{(2)}\\
&=&h_{(1)}(f(x)r)\# h_{(2)}\\
&=&(h_{(1)}f(x))(h_{(2)}r)\# h_{(3)}\\
&=&(h_{(1)}f(x)\# h_{(2)})r\\
&=&\zeta(f\otimes h)(x)r.
\end{eqnarray*}
Hence the image of $\zeta$ is indeed in $\Hom_R(X,R\#H)$. Therefore we obtain a map (still use the same notion):
\begin{equation*}
  \zeta:\Hom_R(X,R)\hat\otimes H\longrightarrow \Hom_R(X,R\#H).
\end{equation*}
We next check that $\zeta$ is a right $H$-module morphism. For $f\in \Hom_R(X,R)$, $h,g\in H$ and $x\in X$, we have
\begin{eqnarray*}
\zeta(f\otimes h)\leftharpoonup g(x)&=&\left(\zeta(f\otimes g)(xS(g_{(1)})\right)g_{(2)}\\
&=&\left(h_{(1)}f(xS(g_{(1)}))\# h_{(2)}\right)g_{(2)}\\
&=&h_{(1)}f(xS(g_{(1)}))\# h_{(2)}g_{(2)}.
\end{eqnarray*}
On the other hand, we have
\begin{eqnarray*}
\zeta\left((f\otimes h)\cdot g\right)(x)&=&\zeta(f\leftharpoonup g_{(1)}\otimes hg_{(2)})(x)\\
&=&h_{(1)}g_{(2)}f\leftharpoonup g_{(1)}(x)\# h_{(2)}g_{(3)}\\
&=&h_{(1)}g_{(3)}S^{-1}(g_{(2)})f(x S(g_{(1)}))\# h_{(2)}g_{(4)}\\
&=&h_{(1)}f(x S(g_{(1)}))\# h_{(2)}g_{(2)},
\end{eqnarray*} where the third equality follows from the definitions of the right $B$-module action (\ref{eq2.2}) on $R$ and the right $H$-action on $\Hom_R(X,R)$. Therefore, $\zeta$ is a right $H$-module morphism.

Since $H\#R$ is a finitely generated free $R$-module, it follows that $$\theta:\Hom_R(X,R)\otimes H\longrightarrow\Hom_R(X,H\#R),\ f\otimes h\mapsto [x\mapsto h\otimes f(x)]$$ is a linear isomorphism. Thus $\zeta$, equal to $\Hom_R(X,\xi)\circ\theta$, is an isomorphism since $\xi$ is an isomorphism.
\end{proof}

\begin{lemma} \label{lem3.5} Let $X$ be a right $B$-module. For each $i\ge0$, we have an $H$-module isomorphism
$$\Ext^i_R(X,R\#H)\cong \Ext_R^i(X,R)\hat\otimes H.$$ Moreover, the isomorphism is functorial in $X$.
\end{lemma}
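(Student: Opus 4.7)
The plan is to upgrade the functorial $i=0$ isomorphism of Lemma \ref{lem3.4} to a degree-wise isomorphism of $\Hom$-complexes and then pass to cohomology. To make both sides compute the correct $\Ext$ groups, I need a single resolution that is simultaneously a resolution in $\Mod B$ and in $\Mod R$, and that also produces the standard right $H$-module structure on $\Ext^i_R(X,R)$.

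First I would pick a projective resolution $P_\bullet \to X$ of $X$ in $\Mod B$. Since $B = R\# H$ is isomorphic as an algebra to $H\# R$ via the map $\xi$ introduced in the proof of Lemma \ref{lem3.4}, $B$ is free of finite rank as a right $R$-module. Consequently each $P_i$ is also projective over $R$, so $P_\bullet \to X$ is simultaneously a projective resolution in $\Mod R$ (and therefore computes $\Ext^i_R(X,-)$) and a $B$-projective resolution of the kind used to endow $\Ext^\ast_R(X,-)$ with its right $H$-module structure via (\ref{eq7}).

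Applying Lemma \ref{lem3.4} in each degree yields right $H$-module isomorphisms $\zeta_i\colon \Hom_R(P_i, R)\hat\otimes H \to \Hom_R(P_i, R\#H)$, and functoriality of $\zeta$ in Lemma \ref{lem3.4} forces the family $\{\zeta_i\}$ to commute with the induced differentials. I thus obtain an isomorphism of complexes of right $H$-modules
$$\Hom_R(P_\bullet, R)\hat\otimes H \xrightarrow{\ \sim\ } \Hom_R(P_\bullet, R\#H).$$
The underlying endofunctor $-\otimes_\kk H$ of $\Vect$ is exact, so cohomology commutes with $\hat\otimes H$, and I get the desired isomorphism $\Ext^i_R(X, R)\hat\otimes H \cong \Ext^i_R(X, R\#H)$ for every $i\ge 0$. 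Functoriality in $X$ is immediate: a $B$-module map $X\to X'$ lifts (uniquely up to chain homotopy) to a chain map of chosen $B$-projective resolutions, and Lemma \ref{lem3.4} is natural in its first variable.

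The only point needing care is the verification that the diagonal $H$-action on $\Hom_R(P_\bullet, R)\hat\otimes H$ is compatible with the differentials, so that the induced structure on cohomology really is the diagonal action on $\Ext^i_R(X, R)\hat\otimes H$. This reduces to showing that the differentials of the complex $\Hom_R(P_\bullet, R)$ are themselves $H$-equivariant for the action (\ref{eq5}), which is the same naturality observation that underlies the $H$-module structure on $\Ext^i_R(X,R)$ recorded in (\ref{eq7}). No further subtlety is expected; the argument is essentially a bookkeeping exercise organised around the careful choice of resolution.
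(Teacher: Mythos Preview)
Your proposal is correct and follows essentially the same approach as the paper: take a $B$-projective resolution of $X$, observe that each term is $R$-projective, apply Lemma~\ref{lem3.4} degree-wise to identify the two $\Hom$-complexes as complexes of right $H$-modules, and pass to cohomology. The only cosmetic difference is that you justify $R$-projectivity of $B$-projectives via the freeness of $B\cong H\#R$ over $R$, whereas the paper cites \cite[Proposition 2.5]{HVZ}; your additional remarks on $H$-equivariance of the differentials simply make explicit what the paper leaves implicit.
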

\begin{proof} Take a projective resolution of the right $B$-module $X$ as follows:
\begin{equation}\label{eq3.3}
  \cdots\longrightarrow P^i\longrightarrow\cdots\longrightarrow P^1\longrightarrow P^0\longrightarrow X\longrightarrow0.
\end{equation}
By \cite[Proposition 2.5]{HVZ}, each $P^i$ is also projective as a right $R$-module. Apply the functor $\Hom_R(-,B)$ on (\ref{eq3.3}) to obtain the following complex
\begin{equation}\label{eq3.4}
  0\longrightarrow \Hom_R(P^0,B)\longrightarrow\Hom_R(P^1,B)\longrightarrow\cdots\longrightarrow \Hom_R(P^i,B)\longrightarrow \cdots.
\end{equation}
By Lemma \ref{lem3.4}, the complex (\ref{eq3.4}) can be written in the following way
{\small\begin{equation}\label{eq3.5}
  0\longrightarrow \Hom_R(P^0,R)\hat\otimes H\longrightarrow\Hom_R(P^1,R)\hat\otimes H\longrightarrow\cdots\longrightarrow \Hom_R(P^i,R)\hat\otimes H\longrightarrow \cdots.
\end{equation}}
Comparing the cohomology of (\ref{eq3.4}) and (\ref{eq3.5}), we obtain the desired isomorphisms. It follows from the general homology theory that the isomorphism is functorial in $X$.
\end{proof}

\begin{proposition}\label{prop4.1} Let $N$ be a finitely generated right $B$-module. Assume that the $H$-module algebra $R$ is admissible and $\Tor B$ is stable. For each $i\ge0$, we have
$$\Ext_{\QMod B}^i(\mathcal{N},\mathcal{B})\cong \Ext^i_{\QMod R}(\mathcal{N},\mathcal{R}).$$
\end{proposition}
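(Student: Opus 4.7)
The plan is to combine Theorem \ref{thm0} with the $H$-module identification of Lemma \ref{lem3.5} and then pass to $H$-invariants. Taking $M=B$ in Theorem \ref{thm0} gives
\[
\Ext^i_{\QMod B}(\mathcal{N},\mathcal{B})\cong \Ext^i_{\QMod R}(\mathcal{N},\mathcal{B})^H,
\]
so it suffices to produce a natural right $H$-module isomorphism $\Ext^i_{\QMod R}(\mathcal{N},\mathcal{B})\cong\Ext^i_{\QMod R}(\mathcal{N},\mathcal{R})\hat\otimes H$ and then to verify $(Y\hat\otimes H)^H\cong Y$ functorially in the right $H$-module $Y$.

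For the displayed isomorphism, note that $\Tor R$ is stable by Lemma \ref{lem2}(i), so Lemma \ref{lem1} (applied to the noetherian algebra $R$) expresses both sides as direct limits over cofinite $R$-submodules $N'$ of $N$:
\[
\Ext^i_{\QMod R}(\mathcal{N},\mathcal{B})\cong\underset{\longrightarrow}{\lim}\,\Ext^i_R(N',B),\qquad\Ext^i_{\QMod R}(\mathcal{N},\mathcal{R})\cong\underset{\longrightarrow}{\lim}\,\Ext^i_R(N',R).
\]
Admissibility of the $H$-module algebra $R$ allows the restriction of these direct systems to cofinite $B$-submodules $N'$ (compare the proof of Lemma \ref{lem6}). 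For each such $N'$, Lemma \ref{lem3.5} supplies a natural right $H$-module isomorphism $\Ext^i_R(N',B)\cong\Ext^i_R(N',R)\hat\otimes H$, functorial in $N'$. Since $H$ is finite dimensional, $-\hat\otimes H$ commutes with direct limits, and the first identification follows upon passing to the limit.

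For the second item, the isomorphism $\psi$ of Lemma \ref{lem3.3} identifies $Y\hat\otimes H$ with $Y\otimes H_\bullet$, whose $H$-action is trivial on $Y$ and by right multiplication on $H$. A direct computation (using a basis of $Y$) gives $(Y\otimes H_\bullet)^H=Y\otimes I$, where $I\subseteq H$ is the space of right integrals. Since $H$ is finite dimensional and semisimple, $I$ is one-dimensional, so fixing a nonzero integral induces a natural isomorphism $Y\otimes I\cong Y$. Assembling the two items proves the proposition.

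The main technical point I expect to confront is the first item: the switch from cofinite $R$-submodules to cofinite $B$-submodules in the direct system computing $\Ext^i_{\QMod R}(\mathcal{N},-)$. This cofinal replacement is exactly what admissibility of $R$ guarantees, and it is needed because Lemma \ref{lem3.5} requires its first argument to be a right $B$-module; the remaining manipulations (compatibility of $(-)^H$ and $-\hat\otimes H$ with direct limits, naturality in $N'$) are routine consequences of the hypothesis that $H$ is finite dimensional and semisimple.
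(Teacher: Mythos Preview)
Your proposal is correct and follows essentially the same route as the paper's proof: both arguments use admissibility to pass to cofinite $B$-submodules in the direct limit, apply Lemma~\ref{lem3.5} termwise to obtain $\Ext^i_{\QMod R}(\mathcal{N},\mathcal{B})\cong\Ext^i_{\QMod R}(\mathcal{N},\mathcal{R})\hat\otimes H$, invoke Theorem~\ref{thm0} with $M=B$, and then use Lemma~\ref{lem3.3} to reduce $(Y\hat\otimes H)^H$ to $(Y\otimes H_\bullet)^H\cong Y$. Your write-up is in fact slightly more explicit than the paper's in two places---you note that Lemma~\ref{lem2}(i) is needed to ensure $\Tor R$ is stable before Lemma~\ref{lem1} can be applied over $R$, and you spell out the integral argument for $(Y\otimes H_\bullet)^H\cong Y$ where the paper simply asserts the vector-space isomorphism.
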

\begin{proof} By Lemma \ref{lem1} and the assumption that the $H$-module algebra $R$ is admissible, $\Ext_{\QMod R}^i(\mathcal{N},\mathcal{B})\cong \underset{\longrightarrow}\lim\Ext^i_R(N',B)$ where $N'$ runs over all the cofinite $B$-submodules of $N$. By Lemma \ref{lem3.5}, $\Ext^i_R(N',B)\cong \Ext^i_R(N',R)\hat\otimes H$ for all cofinite $B$-submodules $N'$ of $N$. Taking the direct limits over all the cofinite $B$-submodules of $N$, we obtain
\begin{equation}\label{eq4.1}
  \Ext_{\QMod R}^i(\mathcal{N},\mathcal{B})\cong \Ext_{\QMod R}^i(\mathcal{N},\mathcal{R})\hat\otimes H.
\end{equation}
Combining Theorem \ref{thm0} and Lemma \ref{lem3.3}, we have
\begin{equation}\label{eq4.2}
  \Ext_{\QMod B}^i(\mathcal{N},\mathcal{B})\cong \left(\Ext_{\QMod R}^i(\mathcal{N},\mathcal{R})\otimes H_\bullet\right)^H.
\end{equation}
The right hand side of (\ref{eq4.2}) is isomorphic to $\Ext_{\QMod R}^i(\mathcal{N},\mathcal{R})$ as a vector space.
\end{proof}

Now we arrive at our main result of this section.
\begin{theorem} \label{thm4.1} Assume that $R$ and $H$ satisfy the conditions in Setup \ref{setup}. If $R$ is AS-Cohen-Macaulay of dimension $d$, then $R$, viewed as a right $R^H$-module, is Cohen-Macaulay (cf. Definition \ref{def3.1}).
\end{theorem}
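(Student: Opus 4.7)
The goal is to show $R^i\Gamma_A(R)=0$ for every $i\ne d$, where $A=R^H$ and $R$ is viewed as a right $A$-module. The plan is to translate each $R^i\Gamma_A(R)$ into an $\Ext$-group in a suitable quotient category and chain together the isomorphisms already set up in the paper: Lemma \ref{lem3.2} moves between local cohomology and $\Ext$ in $\QMod S$, Lemma \ref{lem3.1} moves between $\QMod B$ and $\QMod A$, and Proposition \ref{prop4.1} moves between $\QMod B$ and $\QMod R$. Every one of these tools applies by Setup \ref{setup}, and the AS-Cohen-Macaulay hypothesis on $R$ provides the vanishing input at the end of the chain.

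For the higher degrees $i>0$, I would compose the natural isomorphisms
\[
R^{i+1}\Gamma_A(R)\cong \Ext^i_{\QMod A}(\mathcal{A},\mathcal{R})\cong \Ext^i_{\QMod B}(\mathcal{R}_B,\mathcal{B})\cong \Ext^i_{\QMod R}(\mathcal{R},\mathcal{R})\cong R^{i+1}\Gamma_R(R),
\]
obtained from Lemma \ref{lem3.2}(i) with $(S,M)=(A,R)$; the equivalence of Lemma \ref{lem3.1} (which, as recalled in the proof of Theorem \ref{thm1}, interchanges $\mathcal{R}_B$ with $\mathcal{A}$ and $\mathcal{B}$ with $\mathcal{R}_A$); Proposition \ref{prop4.1} with $N=R$, cyclic as a right $B$-module under the action (\ref{eq2.2}) since $1\cdot B=R$; and Lemma \ref{lem3.2}(i) with $(S,M)=(R,R)$. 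Because $R^{i+1}\Gamma_R(R)=0$ whenever $i+1\ne d$, this gives $R^j\Gamma_A(R)=0$ for every $j\ge 2$ with $j\ne d$.

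The cases $j=0,1$ are to be read off the four-term exact sequence of Lemma \ref{lem3.2}(ii),
\[
0\longrightarrow \Gamma_A(R)\longrightarrow R\xrightarrow{\;\phi_A\;}\Hom_{\QMod A}(\mathcal{A},\mathcal{R})\longrightarrow R^1\Gamma_A(R)\longrightarrow 0,
\]
compared with the analogous sequence for $R$ in place of $A$. Following the case split in the proof of Theorem \ref{thm1}, I would argue: for $d\ge 2$, that $\phi_A$ is an isomorphism, forcing $\Gamma_A(R)=R^1\Gamma_A(R)=0$; for $d=1$, that $\phi_A$ is injective, forcing $\Gamma_A(R)=0$ (while $R^1\Gamma_A(R)$ is free to be nonzero); and for $d=0$, that $R$ is forced to be finite-dimensional by $\Gamma_R(R)=R$, so $R$ is itself an $A$-torsion module, $\Gamma_A(R)=R$, and all higher $R^i\Gamma_A(R)$ vanish.

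The main obstacle is the natural-map compatibility needed to run the low-degree argument. Under the composite isomorphism $\Hom_{\QMod A}(\mathcal{A},\mathcal{R})\cong\Hom_{\QMod R}(\mathcal{R},\mathcal{R})$ assembled from Lemma \ref{lem3.1} and Proposition \ref{prop4.1}, I must check that $\phi_A$ goes over to the natural map $\phi_R:R\to\Hom_{\QMod R}(\mathcal{R},\mathcal{R})$ induced by right multiplication $s\mapsto rs$. This amounts to tracing $r\in R$ through three steps: the class of the $A$-linear map $a\mapsto ra$ must be identified via the tensor equivalence $-\otimes_{\mathcal{B}}\mathcal{R}$ with a prescribed class in $\Hom_{\QMod B}(\mathcal{R}_B,\mathcal{B})$, and then via the isomorphism of Proposition \ref{prop4.1} (which rests on Lemmas \ref{lem3.4} and \ref{lem3.5}) with the class of $s\mapsto rs$. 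Once this diagram is seen to commute, the kernel and cokernel of $\phi_A$ are identified with $\Gamma_R(R)$ and $R^1\Gamma_R(R)$, both of which vanish in the relevant range, and Theorem \ref{thm4.1} follows.
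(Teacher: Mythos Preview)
Your proposal is correct and follows essentially the same route as the paper: establish the key isomorphism $\Ext^i_{\QMod A}(\mathcal{A},\mathcal{R})\cong\Ext^i_{\QMod B}(\mathcal{R},\mathcal{B})\cong\Ext^i_{\QMod R}(\mathcal{R},\mathcal{R})$ via Lemma~\ref{lem3.1} and Proposition~\ref{prop4.1}, then finish as in Theorem~\ref{thm1} using Lemma~\ref{lem3.2}. The paper handles the low-degree cases by simply writing ``The rest of the proof is almost the same as that of Theorem~\ref{thm1}\ldots We omit the similar narratives,'' so your explicit flagging of the compatibility of $\phi_A$ with $\phi_R$ under the composite isomorphism is in fact more careful than what the paper records; that naturality check is implicit in the paper as well and is not a point of divergence.
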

\begin{proof} Set $A:=R^H$. Since $R/A$ is a right $H^*$-dense Galois extension, the functor $-\otimes_{\mathcal{B}}\mathcal{R}:\QMod B\to \QMod A$ is an equivalence of abelian categories (cf. Lemma \ref{lem3.1}). Note that $R$ may be viewed both as a right $A$-module and as a right $B$-module. Under the equivalence $-\otimes_{\mathcal{B}}\mathcal{R}$, $\mathcal{R}$ when viewed as an object in $\QMod A$ corresponds to $\mathcal{B}\in\QMod B$, and $\mathcal{A}\in \QMod A$ corresponds to $\mathcal{R}$ which is viewed as an object in $\QMod B$. Hence for $i\ge0$, we have
\begin{equation*}
  \Ext_{\QMod B}^i(\mathcal{R},\mathcal{B})\cong \Ext_{\QMod A}^i(\mathcal{A},\mathcal{R}).
\end{equation*}
By Proposition \ref{prop4.1}, we obtain
\begin{equation}\label{eq4.3}
  \Ext_{\QMod R}^i(\mathcal{R},\mathcal{R})\cong \Ext_{\QMod A}^i(\mathcal{A},\mathcal{R}).
\end{equation}
The rest of the proof is almost the same as that of Theorem \ref{thm1} according to the value of $d$. We omit the similar narratives.
\end{proof}

\begin{remark} Under the assumptions in Theorem \ref{thm4.1}, the isomorphism (\ref{eq4.3}) implies that
\begin{equation}\label{eq4.4}
  R^i\Gamma_{R^H}(R)\cong R^i\Gamma_R(R),\ \text{for all } i\ge0.
\end{equation}
If the algebra $R$ is a noetherian complete semilocal algebra and some further conditions are satisfied, then (\ref{eq4.4}) follows from \cite[Theorem 2.9]{WZ}.
\end{remark}

\section{Finite group actions on noetherian complete semilocal algebras} \label{sec5}

Recall that a noetherian algebra $\Lambda$ is {\it semilocal} if $\Lambda/J(\Lambda)$ is a finite dimensional semisimple algebra, where $J(\Lambda)$ is the Jacobson radical of $\Lambda$. We say that $\Lambda$ is {\it complete} if $\Lambda$ is complete with respect to the $J(\Lambda)$-adic topology, or equivalently, $\Lambda=\underset{\longleftarrow}\lim \Lambda/J(\Lambda)^n$.

Recall from \cite{WZ2} that a noetherian semilocal algebra $\Lambda$ is called a right {\it AS-Gorenstein} algebra if $\Lambda_\Lambda$ has finite injective dimension $d$, $\Ext^i_\Lambda(\Lambda/J(\Lambda),\Lambda)=0$ for $i\neq d$, and $\Ext^d_\Lambda(\Lambda/J(\Lambda),\Lambda)\cong \Lambda/J(\Lambda)$ as a left $\Lambda$-module. Left AS-Gorenstein algebras are defined similarly. If $\Lambda$ is both left and right AS-Gorenstein, then we simply say that $\Lambda$ is {\it AS-Gorenstein}. If, furthermore, $\Lambda$ has finite global dimension, then we say that $\Lambda$ is {\it AS-regular}. We remark that our definition of AS-Gorenstein algebras is a little stronger than that in \cite{WZ2}.

\begin{lemma} \label{lem5.1} Let $\Lambda$ be a noetherian semilocal algebra. Then $$\Gamma_\Lambda\cong \underset{n\to\infty}\lim\Hom_\Lambda(\Lambda/J(\Lambda)^n,-).$$
\end{lemma}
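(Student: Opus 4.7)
The plan is to identify $\Gamma_\Lambda$ with the stated direct limit by showing that the family $\{J(\Lambda)^n\}_{n\geq 1}$ is cofinal in the directed system of cofinite right ideals of $\Lambda$, with respect to reverse inclusion. Once cofinality is established, the two directed limits defining the functors agree, so the isomorphism follows at the level of functors in the obvious way.

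First I would verify that each $J(\Lambda)^n$ is itself a cofinite right ideal, i.e.\ that $\Lambda/J(\Lambda)^n$ is finite dimensional. Since $\Lambda$ is noetherian, $J(\Lambda)$ is finitely generated, so each quotient $J(\Lambda)^n/J(\Lambda)^{n+1}$ is a finitely generated module over the finite dimensional semisimple algebra $\Lambda/J(\Lambda)$, and hence finite dimensional. A straightforward induction on $n$ using the short exact sequences $0\to J(\Lambda)^n/J(\Lambda)^{n+1}\to \Lambda/J(\Lambda)^{n+1}\to \Lambda/J(\Lambda)^n\to 0$ then yields that $\Lambda/J(\Lambda)^n$ is finite dimensional.

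Next I would show cofinality: for every cofinite right ideal $K\subseteq \Lambda$, there exists some $n$ with $J(\Lambda)^n\subseteq K$. The argument uses the standard fact that any simple right $\Lambda$-module $S$ is annihilated by $J(\Lambda)$; indeed, $SJ(\Lambda)$ is a proper submodule of $S$ by Nakayama's lemma, hence zero by simplicity. Since $\Lambda/K$ is finite dimensional, it has a finite composition series of some length $\ell$; applying $J(\Lambda)$ repeatedly descends through this series, so $(\Lambda/K)\cdot J(\Lambda)^\ell = 0$, equivalently $J(\Lambda)^\ell\subseteq K$.

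Finally, with cofinality in hand, the directed subsystem $\{\Lambda/J(\Lambda)^n\}$ and the full system $\{\Lambda/K\}$ (with $K$ ranging over cofinite right ideals, transition maps given by quotient maps) produce the same direct limit after applying $\Hom_\Lambda(-,M)$ for any $M$. This yields $\Gamma_\Lambda(M) = \underset{K}\lim \Hom_\Lambda(\Lambda/K,M)\cong \underset{n\to\infty}\lim\Hom_\Lambda(\Lambda/J(\Lambda)^n,M)$, naturally in $M$, which is exactly the claim. No step looks technically difficult; the only substantive input is the Nakayama-type observation that simples over $\Lambda$ are $J(\Lambda)$-torsion, which is the key reason the $J$-adic filtration captures the entire torsion functor.
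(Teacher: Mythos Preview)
Your proposal is correct and follows essentially the same approach as the paper: both arguments establish that $\{J(\Lambda)^n\}$ is cofinal in the system of cofinite right ideals and conclude via equality of direct limits. Your version is a bit more explicit (you verify that each $J(\Lambda)^n$ is cofinite and justify $(\Lambda/K)J(\Lambda)^n=0$ via a composition series and Nakayama), whereas the paper simply asserts these facts.
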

\begin{proof} Recall from Section \ref{sec1}, $\Gamma_\Lambda=\underset{\longrightarrow}\lim\Hom_\Lambda(\Lambda/K,-)$ where $K$ runs over all the cofinite right ideal of $\Lambda$. For any cofinite right ideal $K$ of $\Lambda$, there is an integer $n$ such that $(\Lambda/K)J(\Lambda)^n=0$. Then $J(\Lambda)^n\subseteq K$. Hence the direct system defined by all the cofinite right ideals of $S$ and the direct system defined by $\{J(\Lambda)^n|n\ge1\}$ are cofinal. Therefore, $\Gamma_\Lambda=\underset{\longrightarrow}\lim\Hom_\Lambda(\Lambda/K,-)\cong \underset{n\to\infty}\lim\Hom_\Lambda(\Lambda/J(\Lambda)^n,-)$.
\end{proof}

In the rest of this section, we assume $R$ that is a noetherian complete semilocal algebra with Jacobson radical $J$. Let $G\subseteq \Aut(R)$ be a finite subgroup. Then $R$ becomes a left $G$-module algebra. As before, set $B=R\#\kk G$ and $A=R^G$. Example \ref{ex1} shows that the $G$-module algebra $R$ is admissible and $\Tor B$ is stable. Note that $J$ is stable under the $G$-action. Let $\mfm=J\# \kk G$. Then $\mfm$ is a cofinite ideal of $B$, and $\mfm^n=J^n\#\kk G$ for all $n\ge1$. In general, $\mfm$ is not the Jacobson radical of $B$. Besides, for a general semisimple Hopf algebra $H$, an $H$-action on $R$ does not imply that the $H$ acts on the Jacobson radical $J$ stably. Some discussions on the stability of the $H$-action on the Jacobson radical, or on the Jacobson radical of the smash product may be found in \cite{LMS}.

Similar to the proof of Lemma \ref{lem5.1}, we also have
\begin{equation}\label{eqx5.1}
  \Gamma_B\cong \underset{n\to\infty}\lim\Hom_B(B/\mfm^n,-).
\end{equation}

Recall that the {\it depth} of a right module $M$ over a noetherian algebra $\Lambda$ is defined as follows
$$\depth_\Lambda(M)=\min\{i|R^i\Gamma_\Lambda(M)\neq 0\}.$$ We have the following Auslander-Buchsbaum formula, which is a consequence of \cite[Theorem 0.1(1)]{WZ}. We remark that our definition of the depth of a module is equivalent to that in \cite{WZ} when the algebra $\Lambda$ is a notherian complete semilocal algebra (see also, Lemma \ref{lem5.3} below).

\begin{lemma}\label{lem5.2} Let $R$ be a noetherian complete semilocal algebra which is also AS-Gorenstein of injective dimension $d$. For a finitely generated right $B$-module $M$ with finite projective dimension, we have $\pd_B(M)+\depth_B(M)=d$.
\end{lemma}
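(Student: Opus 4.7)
The plan is to reduce the statement to the general Auslander--Buchsbaum formula for complete semilocal algebras proved in \cite{WZ}, applied to $B$ itself. The work therefore lies in two places: first, showing that $B=R\#\kk G$ inherits from $R$ enough structure for \cite[Theorem 0.1(1)]{WZ} to be invoked, and second, identifying $\depth_B(B)$ with $d$.

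First I would verify that $B$ is a noetherian complete semilocal AS-Gorenstein algebra of injective dimension $d$. Noetherianness and the fact that $B$ is complete with respect to the $\mfm$-adic topology (where $\mfm=J\#\kk G$) follow from $B$ being finitely generated and free of finite rank as a left and right $R$-module, together with $R=\underset{\longleftarrow}{\lim}\, R/J^n$ and $\mfm^n=J^n\#\kk G$; modulo the possible distinction between $\mfm$ and $J(B)$, powers of the two radicals are cofinal, so the completeness and semilocal conditions pass to $B$. For the AS-Gorenstein property, one uses Lemma \ref{lem3.4} (or rather its $R$-free analogue) together with the hypothesis on $R$ to compute $\Ext^\ast_B(B/\mfm,B)\cong \Ext^\ast_R(R/J,R)\hat\otimes H$, which is concentrated in degree $d$ and is isomorphic there to $(R/J)\hat\otimes H$, a semisimple left $B$-module. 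Freeness of $B$ over $R$ on both sides also gives $\injdim B_B=d$.

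Next, with $B$ complete semilocal AS-Gorenstein of injective dimension $d$, \cite[Theorem 0.1(1)]{WZ} provides the Auslander--Buchsbaum formula $\pd_B(M)+\depth_B(M)=\depth_B(B)$ for every finitely generated right $B$-module $M$ of finite projective dimension. It remains to compute $\depth_B(B)=d$. Using Lemma \ref{lem5.1} in the form \eqref{eqx5.1}, we have $R^i\Gamma_B(B)=\underset{n\to\infty}{\lim}\Ext^i_B(B/\mfm^n,B)$. An induction on $n$ along the short exact sequences
$$0\longrightarrow \mfm^n/\mfm^{n+1}\longrightarrow B/\mfm^{n+1}\longrightarrow B/\mfm^n\longrightarrow 0,$$
in which $\mfm^n/\mfm^{n+1}$ is a finite direct sum of copies of $B/\mfm$, together with the AS-Gorenstein concentration $\Ext^i_B(B/\mfm,B)=0$ for $i\neq d$, gives $R^i\Gamma_B(B)=0$ for $i<d$ and $R^d\Gamma_B(B)\neq 0$. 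Thus $\depth_B(B)=d$ and the formula follows.

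The main obstacle is the transfer of AS-Gorensteinness from $R$ to $B$, since Lemma \ref{lem3.4} is stated for $R$-modules and one must pass through the left $B$-module structure to conclude that $\Ext^d_B(B/\mfm,B)\cong B/\mfm$ as left $B$-modules rather than merely as $R$-modules. A smaller technical point is the compatibility of our $R^i\Gamma$-based depth with the $\Ext^\ast_B(B/J(B),-)$-based depth used in \cite{WZ}; this is flagged explicitly in the remark preceding the lemma and handled by the forthcoming Lemma \ref{lem5.3}.
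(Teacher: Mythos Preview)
Your approach is plausible but takes a genuinely different route from the paper, and it is considerably heavier. The paper never attempts to show that $B$ itself is AS-Gorenstein. Instead it works entirely over $R$, where the AS-Gorenstein hypothesis is given, and then transfers both invariants from $R$ to $B$. Concretely: Lemma~\ref{lemx1} gives $\depth_B(M)=\depth_R(M)$; \cite[Theorem 0.1(1)]{WZ} applied to $R$ yields $\pd_R(M)+\depth_R(M)=d$; and finally one checks $\pd_R(M)=\pd_B(M)$ by observing that projective $B$-modules are projective over $R$ (so $\pd_R(M)\le\pd_B(M)$) and, for the reverse inequality, that the $\pd_R(M)$-th syzygy in a $B$-projective resolution of $M$ is $R$-projective, hence $B$-projective by \cite[Proposition 2.5]{HVZ}. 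This is short and uses only facts already established in the paper.

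Your route, by contrast, requires verifying that $B$ is complete semilocal and AS-Gorenstein in the precise sense needed for \cite{WZ}. You correctly flag the main obstacle: even granting the $\Ext$ computation, you must identify $\Ext^d_B(B/J(B),B)$ with $B/J(B)$ as a \emph{left $B$-module}, and Lemma~\ref{lem3.4} (which concerns $\Hom_R(-,R\#H)$, not $\Hom_B$) does not deliver this directly---one still has to pass through \eqref{eq7} or a change-of-rings argument and track the left action. Moreover, the paper explicitly notes that $\mfm$ need not equal $J(B)$, so your computation of $\Ext^*_B(B/\mfm,B)$ is not yet the AS-Gorenstein condition; you would need to argue separately that the condition for $B/\mfm$ implies it for $B/J(B)$. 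None of this is insurmountable, but it is extra work that the paper's transfer argument avoids entirely.
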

\begin{proof} Note that $M$ is also a finitely generated right $R$-module. By Lemma \ref{lemx1}, $\depth_B(M)=\depth_R(M)$. By \cite[Theorem 0.1(1)]{WZ}, we have
\begin{equation}\label{eqx5.2}
  \pd_R(M)=d-\depth_R(M)=d-\depth_B(M).
\end{equation}
Since a projective right $B$-module is projective as a right $R$-module, $\pd_R(M)\leq \pd_B(M)$. On the other hand, set $p=\pd_R(M)$. Let $\cdots \to P^i\overset{\delta^i}\to\cdots\overset{\delta^1}\to P^0\to M\to 0$ be a projective resolution of the right $B$-module $M$. Then the $p$th syzygy $\ker\delta^{p-1}$ of the resolution is projective as an $R$-module since $\pd_R(M)=p$. By \cite[Proposition 2.5]{HVZ}, $\ker\delta^{p-1}$ is projective as a right $B$-module, and hence $\pd_B(M)\leq p$. Therefore $\pd_R(M)=\pd_B(M)$. By the equalities (\ref{eqx5.2}), $\pd_B(M)+\depth_B(M)=d$.
\end{proof}

It is not hard to see that if a noetherian complete semilocal algebra is AS-Gorenstein with injective dimension $d$, then it is AS-Cohen-Macaulay of dimension $d$ in the sense of Definition \ref{def3.1} (cf. \cite{WZ}).

\begin{theorem}\label{thm5.1} Let $R$ be a noetherian complete semilocal algebra, and let $G\subseteq\Aut(R)$ be a finite subgroup. Assume that $R/R^G$ is a right $\kk G^*$-dense Galois extension and that $R$ is AS-regular of global dimension 2. Set $A=R^G$. Then the following statements hold.

{\rm(i)} $A$ is AS-Cohen-Macaulay of dimension 2.

{\rm(ii)} A finitely generated right $A$-module $M$ is Cohen-Macaulay if and only if $M\in \text{\rm add} (R_A)$, where $\text{\rm add} (R_A)$ is the subcategory of $\Mod A$ consisting of all the direct summands of finite direct sums of $R_A$.
\end{theorem}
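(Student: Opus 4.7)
Part (i) should be a direct application of Theorem \ref{thm1}. The hypotheses of Setup \ref{setup} are met because Example \ref{ex1} handles exactly the case of a noetherian complete semilocal algebra acted on by a finite group $G$ with $\ch\kk\nmid|G|$ (which is implicit here since $\kk G$ must be semisimple for the dense Galois framework), and $R$ being AS-regular of global dimension $2$ implies $R$ is AS-Gorenstein of injective dimension $2$, hence AS-Cohen-Macaulay of dimension $2$. Theorem \ref{thm1} then yields that $A=R^G$ is AS-Cohen-Macaulay of dimension $2$. The ``$\Leftarrow$'' direction of (ii) is immediate from Theorem \ref{thm4.1}: $R_A$ is Cohen-Macaulay, and additivity of $R^i\Gamma_A$ propagates this to finite direct sums and then to their direct summands.

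For the ``$\Rightarrow$'' direction, let $M$ be a finitely generated Cohen-Macaulay right $A$-module. The plan is to transfer $M$ to a finitely generated projective right $B$-module via the equivalence $F=-\otimes_\mathcal{B}\mathcal{R}:\QMod B\xrightarrow{\sim}\QMod A$ of Lemma \ref{lem3.1}, apply Auslander-Buchsbaum on the $B$-side, and then descend back to $\Mod A$. First, equation (\ref{eq7}) combined with $\gldim R=2$ gives $\gldim B\leq 2$. Next, since both $A$ and $B$ are noetherian, $F$ restricts to an equivalence of the full subcategories of finitely generated (coherent) objects, so I can choose a finitely generated right $B$-module $N$ with $\pi_B(N)\cong F^{-1}(\pi_A(M))$. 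After quotienting by $\Gamma_B(N)$ and then attaching the missing sections via the section functor $\omega_B$ right adjoint to $\pi_B$, I may assume $\Gamma_B(N)=R^1\Gamma_B(N)=0$; under the equivalence this saturation satisfies $\omega_B\pi_B(N)\cong \Hom_{\QMod A}(\mathcal{R},\mathcal{M})$, which one checks is finitely generated over $B$ using that $R$ is finitely generated over $A$ in the complete semilocal setting (via integrality under a finite group).

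Now $\depth_B(N)\geq 2$, so the Auslander-Buchsbaum formula (Lemma \ref{lem5.2}) combined with $\gldim B\leq 2$ forces $\pd_B(N)=0$, i.e., $N$ is a direct summand of $B^n$ for some $n$. Applying $F$ to this splitting and invoking the correspondences $F(\mathcal{B})=\mathcal{R}_A$ and $F(\mathcal{R}_B)=\mathcal{A}$ recorded in the proof of Theorem \ref{thm4.1}, one sees that $\pi_A(M)$ is a direct summand of $\pi_A(R_A^n)$ in $\QMod A$. Finally, applying $\Hom_{\QMod A}(\mathcal{A},-)$ to this split embedding and using Lemma \ref{lem3.2}(ii) to identify $\Hom_{\QMod A}(\mathcal{A},\mathcal{M})\cong M$ (since $\Gamma_A(M)=R^1\Gamma_A(M)=0$) and $\Hom_{\QMod A}(\mathcal{A},\mathcal{R})\cong R$ (since $R_A$ is CM of dimension $2$ by Theorem \ref{thm4.1}) yields a split embedding $M\hookrightarrow R^n$ in $\Mod A$, proving $M\in\text{\rm add}(R_A)$.

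The main obstacle I foresee is the preservation of finite generation during the saturation step: $\omega_B$ does not in general send finitely generated modules to finitely generated modules, and without this we cannot feed $N$ into the Auslander-Buchsbaum formula. This is where the complete semilocal hypothesis really enters: one exploits that $R$ is finitely generated as an $A$-module together with the identification $\omega_B\pi_B(N)\cong \Hom_{\QMod A}(\mathcal{R},\mathcal{M})$ to control the saturated module, and one must verify that the cokernel $R^1\Gamma_B(N/\Gamma_B(N))$ that one is ``attaching'' is finitely generated (even better, this will essentially be a torsion $B$-module of bounded length in the local cohomology).
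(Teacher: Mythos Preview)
Your argument for (i) and for the $\Leftarrow$ direction of (ii) matches the paper. For the $\Rightarrow$ direction your global plan---transfer to $B$, apply Auslander--Buchsbaum, descend---is also the paper's, but the paper executes it at the module level rather than via saturation in $\QMod B$. Concretely, the paper uses the right adjoint $F'=\Hom_A(R,-)$ of $F=-\otimes_BR$ (which satisfies $FF'\cong\text{id}_{\Mod A}$), sets $M=F'(N)$ for the given Cohen--Macaulay $A$-module $N$, and shows directly that $\Gamma_B(M)=R^1\Gamma_B(M)=0$. Two ingredients you do not invoke are essential here: \cite[Theorem~3.10]{HVZ2}, which in this situation gives an algebra isomorphism $R\#\kk G\cong\End_A(R)$ (so $F'(R)\cong B$), and Lemma~\ref{lem5.3}, which extracts from the Cohen--Macaulay hypothesis that $\Ext^i_A(T,N)=0$ for all finite-dimensional $T$ and all $i<2$, whence $\Hom_A(R,N)\to\Hom_{\QMod A}(\mathcal{R},\mathcal{N})$ is an isomorphism. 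A commuting square relating $F$, $\mathcal{F}$ and the two projections then forces $\Hom_B(B,M)\to\Hom_{\QMod B}(\mathcal{B},\mathcal{M})$ to be an isomorphism as well. After Auslander--Buchsbaum, the recovery $N=FF'(N)=F(M)$ lands in $\text{add}(R_A)$ on the nose, without any further passage through $\QMod A$.

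The finite-generation obstacle you flag is genuine for your route, and your proposed fix is not solid: that $R$ is finitely generated over $R^G$ ``via integrality under a finite group'' is a commutative reflex with no automatic noncommutative analogue, and even granting it you do not explain why $\Hom_{\QMod A}(\mathcal{R},\mathcal{M})$ is then finitely generated over $B$. The paper's packaging sidesteps this because $F'$ is applied directly in $\Mod A\to\Mod B$ and the descent uses $FF'\cong\text{id}$; in fact, once Lemma~\ref{lem5.3} is in hand, your saturated module $\omega_B\pi_B(N)$ is exactly $F'(M)$, so the two constructions coincide---but the paper's version avoids both the saturation detour and your final $\Hom_{\QMod A}(\mathcal{A},-)$ step.
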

\begin{proof} The statement (i) is a special case of Theorem \ref{thm1}.

(ii) By Theorem \ref{thm4.1}, $R_A$ is a Cohen-Macaulay module. Hence any module in $\text{add}R_A$ is Cohen-Macaulay.

Conversely, note that the functor $-\otimes_{\mathcal{B}}\mathcal{R}:\QMod B\to \QMod A$ in Lemma \ref{lem3.1} is induced by the functor $-\otimes_B R:\Mod B\to \Mod A$ (cf. \cite[Sections 2 and 3]{HVZ2}), that is, we have the following commutative diagram
\begin{equation}\label{eq5.1}
  \xymatrix{
  \Mod B \ar[d]_{\pi} \ar[r]^{-\otimes_BR} & \Mod A \ar[d]_{\pi} \\
  \QMod B \ar[r]^{-\otimes_{\mathcal{B}}\mathcal{R}} & \QMod A. }
\end{equation}
Let $F=-\otimes_BR$ and $\mathcal{F}=-\otimes_{\mathcal{B}}\mathcal{R}$. The functor $F$ has a right adjoint functor $F':=\Hom_{A^\circ}(R,-)$ (cf. \cite[Section 2]{HVZ2}), where $\Hom_{A^\circ}(-,-)$ is the Hom-functor in the category of left $A$-modules. Moreover, $FF'\cong \text{id}_{\Mod A}$. Hence $F'$ is fully faithful and $\Hom_{B}(F'X,F'Y)\overset{F}\longrightarrow\Hom_{A}(X,Y)$ is an isomorphism for any $X,Y\in\Mod A$.

Let $N_A$ be a nonzero finitely generated Cohen-Macaulay module. Then we obtain that the map $\Hom_B(F'(R),F'(N))\overset{F}\longrightarrow\Hom_A(R,N)$ is an isomorphism. Since $R/A$ is a right $\kk G^*$-dense Galois extension and $R$ is AS-regular of global dimension 2, the natural map $R\#\kk G\to \Hom_A(R,R), r\#h\mapsto [r'\mapsto r(h\cdot r')]$ is an isomorphism of algebras by \cite[Theorem 3.10]{HVZ2}. Hence as a right $B$-module, $F'(R)=\Hom_A(R,R)$ is isomorphic to $B$. Set $M=F'(N)=N\otimes _AR$.

From the above commutative diagram (\ref{eq5.1}), we obtain the following commutative diagram
\begin{equation}\label{eq5.2}
  \xymatrix{
  \Hom_B(B,M) \ar[d]_{\pi} \ar[r]^{F} & \Hom_A(R,N) \ar[d]_{\pi} \\
  \Hom_{\QMod B}(\mathcal{B},\mathcal{M}) \ar[r]^{\mathcal{F}} & \Hom_{\QMod A}(\mathcal{R},\mathcal{N}). }
\end{equation}
The top map in the diagram (\ref{eq5.2}) is an isomorphism. By Lemma \ref{lem3.1}, the bottom map is also an isomorphism. Consider the exact sequence $$0\to K\to R\to R/K\to0,$$ where $K$ is a cofinite submodule of $R$.
Applying $\Hom_A(-,N)$ to the above sequence, we obtain the following exact sequence
\begin{equation}\label{eq5.3}
 0\to \Hom_A(R/K,N)\to\Hom_A(R,N)\to\Hom_A(K,N)\to \Ext^1_A(R/K,N)
\end{equation}
Since $N$ is Cohen-Macaulay and $R/K$ is a finite dimensional, $\Hom_A(R/K,N)=0$ and $\Ext^1_A(R/K,N)=0$ by Lemma \ref{lem5.3} below.
Hence (\ref{eq5.3}) implies that the natural map $$\Hom_A(R,N)\to\Hom_A(K,N)$$ is an isomorphism for all cofinite submodules $K$ of $R$.

Taking the direct limits on both sides of the isomorphism above over all the cofinite submodules $K$ of $R$, we obtain that the projection map
\begin{equation*}
   \Hom_A(R,N)\overset{\pi}\to\Hom_{\QMod A}(\mathcal{R},\mathcal{N})
   \end{equation*}
is an isomorphism.

In summary, we obtain that the top map, the bottom map and the right vertical map in the diagram (\ref{eq5.2}) are all isomorphisms. Hence the left vertical map in the diagram (\ref{eq5.2}) is also an isomorphism.

By Lemma \ref{lem5.1}, the exact sequence $0\to \mathfrak{m}^n\to B\to B/\mathfrak{m}\to0$ implies the next exact sequence
$$0\to \Gamma_B(M)\to \Hom_B(B,M)\overset{\pi}\to \Hom_{\QMod B}(\mathcal{B},\mathcal{M})\to R^1\Gamma_B(M)\to 0.$$ Since the projection map $\pi$ is an isomorphism, $\Gamma_B(M)=R^1\Gamma_B(M)=0$. Since $R$ is of global dimension 2, the global dimension of $B$ is 2 as well (cf. \cite{L}). By Lemma \ref{lem5.2}, $M$ is a projective $B$-module. Hence $M$ is a direct summand of a free module $B^{(n)}$. Therefore $N=F(M)$ is a direct summand of $F(B)^{(n)}=(R_A)^{(n)}$, that is, $N\in \text{add} (R_A)$.
\end{proof}

\begin{lemma} \label{lem5.3} Let $A$ be a noetherian algebra. Let $M_S$ be a finitely generated module. If $R^i\Gamma_A(M)=0$ for $i<d$, then $\Ext^i_A(T,M)=0$ for all finite dimensional right $A$-module $T$.
\end{lemma}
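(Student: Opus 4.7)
The plan is to induct on $i$, using the identity $R^i\Gamma_A(M) = \underset{\longrightarrow}\lim \Ext^i_A(A/K, M)$ (where $K$ ranges over the cofinite right ideals of $A$) together with d\'evissage along composition series of $T$. The base case $i = 0$ is immediate: $\Gamma_A(M) = 0$ means $M$ is torsion-free, and any homomorphism from a finite-dimensional (hence torsion) module $T$ into $M$ has image in $\Gamma_A(M) = 0$, so $\Hom_A(T, M) = 0$.

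For the inductive step with $1 \leq i < d$, I would first reduce to $T$ simple, say $T = A/\mathfrak{m}$ for some maximal cofinite right ideal $\mathfrak{m}$. A finite-dimensional $T$ has a composition series whose factors are such simples, and the long exact sequence for $\Ext^i_A(-, M)$ applied to each step of the filtration propagates the vanishing from the simple factors to all of $T$ by induction on composition length. For a simple $T = A/\mathfrak{m}$, fix $\xi \in \Ext^i_A(A/\mathfrak{m}, M)$. Since $R^i\Gamma_A(M) = 0$, the element $\xi$ maps to zero in $\Ext^i_A(A/K, M)$ for some cofinite right ideal $K \subseteq \mathfrak{m}$. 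The short exact sequence $0 \to \mathfrak{m}/K \to A/K \to A/\mathfrak{m} \to 0$ produces the fragment $\Ext^{i-1}_A(\mathfrak{m}/K, M) \to \Ext^i_A(A/\mathfrak{m}, M) \to \Ext^i_A(A/K, M)$ of the long exact sequence, so $\xi$ lifts to some $\eta \in \Ext^{i-1}_A(\mathfrak{m}/K, M)$. Since $\mathfrak{m}/K$ is finite-dimensional and $i - 1 < d$, the inductive hypothesis forces $\eta = 0$ and hence $\xi = 0$.

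The main conceptual obstacle is that the hypothesis only controls the direct limit, so the individual groups $\Ext^i_A(A/K, M)$ need not vanish; the key trick is to convert the fact that $\xi$ eventually dies in the direct system into a lifting problem whose obstruction lives in $\Ext^{i-1}$ of the finite-dimensional quotient $\mathfrak{m}/K$, a group already annihilated by the inductive hypothesis. The dévissage reduction plus a single dimension shift is what allows the direct-limit vanishing to be upgraded to pointwise vanishing on all finite-dimensional modules.
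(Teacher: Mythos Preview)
Your proof is correct and takes a genuinely different route from the paper's. The paper argues via the minimal injective resolution $0 \to M \to I^0 \overset{\delta^0}\to I^1 \to \cdots$: it shows directly that $I^i$ is torsion free for all $i < d$, using that in a minimal resolution each $\ker \delta^k$ is essential in $I^k$, so any nonzero torsion in $I^k$ would meet $\ker \delta^k$ nontrivially and (since $\Gamma_A(I^j)=0$ for $j<k$) produce a nonzero class in $R^k\Gamma_A(M)$. Once the $I^i$ are torsion free, $\Hom_A(T, I^i) = 0$ for any finite-dimensional $T$ and the Ext-vanishing is immediate. Your argument instead avoids injective resolutions altogether, working purely with the colimit description of $R^i\Gamma_A$ together with long exact sequences in Ext, via induction on $i$ and d\'evissage on composition length. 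The paper's approach yields a slightly stronger structural byproduct (torsion-freeness of the initial segment of the minimal injective resolution) and is shorter; your approach is more elementary in that it does not invoke minimal resolutions or essential extensions, and it makes transparent how the colimit vanishing upgrades to pointwise vanishing on all finite-dimensional $T$.
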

\begin{proof} Take a minimal injective resolution of $M$ as follows
 $$0\to M\to I^0\overset{\delta^0}\to I^1\overset{\delta^1}\to\cdots\overset{\delta^{i-1}}\to I^i\overset{\delta^i}\to\cdots.$$ We claim that each $I^i$ is torsion free for all $i<d$.
Since $\Gamma_A(M)=0$, $M$ is torsion free. Hence $I^0$ is torsion free. Now suppose that there is some $i<d$ such that $I^i$ is not torsion free. Assume that $k<d$ is the smallest integer such that $\Gamma_A(I^k)\neq0$. Since the injective resolution is minimal, $\ker \delta^k$ is essential in $I^k$. Hence $\ker \delta^k\cap\Gamma_A(I^k)\neq0$. Then the kernel of the restriction $\delta^k:\Gamma_A(I^k)\to\Gamma_A(I^{k+1})$ is nontrivial. Therefore $R^k\Gamma_A(M)\neq0$, a contradiction. Hence for each $i<d$, $I^i$ is torsion free. Therefore, for any finite dimensional module $T$, $\Hom_A(T,I^i)=0$ for all $i<d$, which implies that $\Ext^i_A(T,M)=0$.
\end{proof}

\begin{remark}\label{rm5.1}  The above theorem may be viewed as a nongraded version of \cite[Section 3]{Jo}, \cite[Corollary 1.3]{Ue1} (see also \cite[Proposition 5.2]{Ue2}) and \cite[Theorem 4.4]{CKWZ}. Our method is also valid for noetherian graded algebras (cf. Example \ref{ex2}), however our method is quite different from those in \cite{Jo,Ue1,CKWZ}.
\end{remark}

\vspace{5mm}

\subsection*{Acknowledgments}
J.-W. He is supported by NSFC
(No. 11571239, 11401001)
and Y. Zhang by an FWO-grant.

\vspace{5mm}

%\bibliography{}

\begin{thebibliography}{99}
\bibitem[A]{A} M. Auslander, \emph{Rational singularities and almost split sequences}, Trans. Amer. Math. Soc. 293 (1986), 511--531.
\bibitem[AZ]{AZ}
M. Artin, J. J. Zhang,
\emph{Noncommutative projective schemes},
Adv. Math. {\bf 109} (1994), 228--287.
\bibitem[CH]{CH} O. Celikbas and H. Holm, \emph{Equivalences from tilting theory and commutative algebra from the adjoint functor point of view}, arXiv:1703.06685, 2017.
\bibitem[CFM]{CFM} M. Cohen and D. Fischman, S. Montgomery, \emph{Hopf Galois extensions, Smash products, and Morita equivalence}, J. Algebra 133 (1990), 351--372.
\bibitem[CKWZ]{CKWZ} K. Chan, E. Kirkman, C. Walton, and J.J. Zhang, \emph{McKay Correspondence for semisimple
Hopf actions on regular graded algebras, part II}, arXiv:1610.01220, 2016.
\bibitem[Ja]{J} A. V. Jategaonkar, \emph{Morita duality and Noetherian rings}, J. Algebra 69 (1981), 358--371.
\bibitem[Jo]{Jo} P. J{\o}rgensen, \emph{Finite Cohen-Macaulay type and smooth non-commutative schemes}, Canad. J.
Math., 60 (2008), 379--390.
\bibitem[HVZ1]{HVZ} J.-W. He, F. Van Oystaeyen and Y. Zhang, \emph{Hopf algebra actions on differential graded algebras and applications},
Bull. Belg. Math. Soc. Simon Stevin 18 (2011), 99--111.
\bibitem[HVZ2]{HVZ2} J.-W. He, F. Van Oystaeyen and Y. Zhang, \emph{Hopf dense Galois extensions with applications}, J. Algebra 476 (2017), 134--160.
\bibitem[L]{L} G. Liu, \emph{A note on global dimension of smash products}, Comm. Algebra 33 (2005), 2625--2627.
\bibitem[LMS]{LMS} V. Linchenko, S. Montgomery, L. W. Small, \emph{Stable Jacobson radicals and semiprime smash products}, Bull. London Math. Soc. 37 (2005), 860--872.
\bibitem[VdB]{VdB} M. Van den Bergh, \emph{Existence theorems for dualizing complexes over non-commutative graded and filtered rings},
J. Algebra 195 (1997), 662--679.
\bibitem[Ue1]{Ue1} K. Ueyama, \emph{Noncommutative graded algebra of finite Cohen-Macaulay representation type}, Proc. Amer. Math. Soc. 143 (2015), 3703--3715.
\bibitem[Ue2]{Ue2}  K. Ueyama, \emph{Graded maximal Cohen-Macaulay modules over noncommutative Gorenstein isolated singularities}, J. Algebra 383 (2013), 85--103.
\bibitem[WZ1]{WZ} Q.-S. Wu, J. J. Zhang, \emph{Dualizing complexes over noncommutative local rings}, J. Algebra 239 (2001), 513--548.
\bibitem[WZ2]{WZ2} Q.-S. Wu, J. J. Zhang, \emph{Homological identities for noncommutative rings}, J. Algebra 242 (2001), 516--535.
\bibitem[Z]{Z} M. R. Zargar, \emph{On the relative Cohen-Macaulay modules}, J. Algebra Appl. 14 (2015), 1550042, 7 pp.

\end{thebibliography}

\end{document}